\newtheorem{theorem}{Theorem}[section]
\newtheorem*{maintheorem}{Main Theorem}
\newtheorem{lemma}[theorem]{Lemma}
\newtheorem{prop}[theorem]{Proposition}
\theoremstyle{definition}
\newtheorem{defn}[theorem]{Definition}
\newtheorem{rem}[theorem]{Note}
\newtheorem{ex}[theorem]{Example}
\newcommand{\sbt}{\,\begin{picture}(-1,1)(0.5,-1)\circle*{1.8}\end{picture}\hspace{.05cm}}
\DeclareMathOperator{\pr}{pr}
\DeclareMathOperator{\id}{id}
\DeclareMathOperator{\im}{Im}
\DeclareMathOperator{\map}{Map}
\DeclareMathOperator{\Top}{Top}
\DeclareMathOperator{\Ob}{Ob}
\DeclareMathOperator{\Mor}{Mor}
\DeclareMathOperator{\emb}{Emb}
\DeclareMathOperator{\colim}{colim}
\DeclareMathOperator{\fr}{Fr}
\DeclareMathOperator{\C}{\mathcal{C}_{\Psi}(M)}
\DeclareMathOperator{\D}{D_{\Psi}(M)}
\DeclareMathOperator{\res}{res}
\DeclareMathOperator{\sing}{Sing}
\begin{document}

\begin{center}\LARGE{A relative h-principle via cobordism-like categories}\end{center}

\begin{center}$\mbox{\large{Emanuele Dotto}}^{{\ast}}$\let\thefootnote\relax\footnotetext{$^\ast$Partially supported by
ERC Adv.Grant No.228082. and by DNRF Centre for Symmetry and Deformation
}\end{center}
\vspace{.3cm}
\begin{quote}
\textsc{Abstract}. We prove an h-principle with boundary condition for a certain class of sheaves $\Psi\colon\emb_{d}^{op}\longrightarrow \Top$. The techniques used in the proof come from the study of the homotopy type of cobordism categories, and they are of simplicial and categorical nature. Applying the main result of this paper we recover the homotopy equivalence $B\mathcal{C}_{k,d}\simeq\Omega^{d-1}Th(\gamma_{k,d}^\bot)$ of \cite{soren} and \cite{GMTW}.
\end{quote}
\vspace{-.5cm}
\tableofcontents

\section*{Introduction}
Given a continuous sheaf $\Psi\colon\emb_{d}^{op}\longrightarrow \Top$ on the category of smooth $d$-dimensional manifolds without boundary with embeddings as morphisms, one can define a new sheaf $\Psi^\ast\colon\emb_{d}^{op}\longrightarrow \Top$ together with an h-principle (or scanning) map $h\colon\Psi\longrightarrow\Psi^{\ast}$. Gromov shows in \cite{gromov} that if $\Psi$ is "microflexible" for an open manifold $M$, the map $h\colon\Psi(M)\longrightarrow\Psi^{\ast}(M)$ is a weak homotopy equivalence. 

In this paper we prove a relative result. One can define $\Psi$ for a manifold with boundary $M$ by gluing an external collar, obtaining a restriction map $\Psi(M)\longrightarrow\Psi(\partial M\times \mathbb{R})$. Given an element $g_0\in\Psi(\partial M\times\mathbb{R})$ consider the space $\Psi(M;g_0)$ of elements of $\Psi(M)$ that restrict to $g_0$ in a neighborhood of $\partial M\times [0,\infty)$. We find conditions on $\Psi$ so that the restriction of the h-principle map $h\colon\Psi(M;g_0)\longrightarrow\Psi^\ast(M;h(g_0))$ is a weak equivalence.

Our approach to this problem is motivated by the study of the weak homotopy type of cobordism categories. Let $\mathcal{C}_{k,d}$ be the cobordism category of \cite{GMTW}, having $k$-dimensional compact submanifolds of $\mathbb{R}^{d-1}\times [a,b]$ as morphisms. The Madsen-Weiss theorem of \cite{soren},\cite{GMTW} and \cite{MadsenWeiss} shows the existence of a weak equivalence
\[B\mathcal{C}_{k,d}\simeq\Omega^{d-1}Th(\gamma_{k,d}^\bot)\]
where $\gamma_{k,d}^\bot=\{(V,v)\in\mathcal{G}_{k,d}\times\mathbb{R}^{d}|v\bot V\}$ is the total space of the complement of the tautological bundle over the Grassmanians $\mathcal{G}_{k,d}$ of $k$-vector subspaces of $\mathbb{R}^{d}$. This equivalence can be rephrased as a relative h-principle for the sheaf $\Psi_k\colon\emb_{d}^{op}\longrightarrow \Top$ defined by the set
\[\Psi_k(M)=\{W\subset M| W \mbox{ is a $k$-submanifold, closed as a subset}\}\]
suitably topologized (cf. \cite{soren},\cite{osmonoids}). The boundary condition needed for the Madsen-Weiss theorem is the empty submanifold $\emptyset\in\Psi_k(\partial M\times \mathbb{R})$.

In this paper we modify the proof of \cite{david} of the Madsen-Weiss theorem using a Quillen B argument. Then we reformulate the structural properties of $\Psi_k$ used in the proof to obtain conditions on a general sheaf $\Psi$, and we obtain the following.

\begin{maintheorem}
Suppose that
\begin{enumerate}
\item The h-principle maps $\Psi(M)\longrightarrow \Psi^\ast(M)$ and $\Psi(\partial M\times\mathbb{R})\longrightarrow \Psi^\ast(\partial M\times\mathbb{R})$ are weak equivalences,
\item The orthogonal elements are almost open in $\Psi(M)$ (see \ref{transverse} and \ref{almostopen}),
\item $\Psi$ is group like at $M$ (see \ref{gplike}),
\item $\Psi$ is damping at $M$ (see \ref{damping}).
\end{enumerate}
Then for any $g_0\in \Psi(\partial M\times\mathbb{R})$ orthogonal to $\partial M$, the relative h-principle map $h\colon\Psi(M;g_0)\longrightarrow \Psi^\ast(M;h(g_{0}))$ is a weak equivalence.
\end{maintheorem}

For the sheaf $\Psi_k$ of the Madsen-Weiss theorem, the properties above reduce essentially to the following. Orthogonal elements of $\Psi_k(\partial M\times\mathbb{R})$ are submanifolds that intersect $\partial M\times 0$ orthogonally in the classical sense. These elements are "almost open", in the sense that the inclusion into elements that intersect $\partial M\times 0$ transversally is an equivalence, and transversality is an open condition. The group-like condition for $\Psi_k$ is the existence of a "dual manifold" for every submanifold $N\subset \partial M\times[0,1]$ that intersects the boundary orthogonally. It is the submanifold of $\partial M\times[1,2]$ defined by $2-N$. The damping condition is a formal condition needed for proving that structural maps for a Quillen theorem B are weak equivalences. It holds for $\Psi_k$ by the smooth approximation theorem of \cite{osmonoids}.

In order to prove the main theorem we build a model for the restriction map $\Psi(M)\longrightarrow\Psi(\partial M)$ using a functor of simplicial categories $\C\longrightarrow\partial\C$. Here $\C$ and $\partial\C$ are defined imitating the construction of cobordism categories. The conditions above allow us to build weak equivalences
\[\xymatrix{B\C \ \simeq \ \Psi(M)\ar@<6ex>[d]\ar@<-6ex>[d]\\
B\partial\C\simeq\Psi(\partial M)
}\]
analogous to \cite{david} and \cite{osmonoids}. Then 
we identify the homotopy fiber of the left-hand map using Quillen theorem B' of \cite{wald}.

In the last section of the paper we show that the sheaf of submanifolds $\Psi_k$ of \cite{soren} and \cite{osmonoids} satisfies the conditions above, giving a "Quillen B proof" of the Madsen-Weiss theorem. We can moreover replace the empty boundary condition with any manifold $g$ that is orthogonal near the boundary of $D^{d-1}\times\mathbb{R}$. This gives a description of the homotopy type of the cobordism category of bordisms that agree with $g$ around $S^{d-2}\times \mathbb{R}$.

\subsection*{Acknowledgment} I am extremely thankful to David Ayala for supervising me closely during this whole project. Without his inspiring ideas and helpful suggestions this paper would not exist.

\section{Relative h-principles and strategy}\label{sec1}

Let $\emb_d$ be the category with objects boundaryless $d$-dimensional manifolds and embeddings as morphisms. This category is enriched in topological spaces endowing the set of morphisms between two manifolds with the $C^\infty$-topology.

We will consider continuous sheaves $\Psi\colon {\emb_d}^{op}\longrightarrow \Top$, where $\emb_d$ has the standard Grothendieck topology. Given such a sheaf, we extend it to ${\emb_{d-1}}^{op}$ by
\[\Psi(N):=\Psi(N\times\mathbb{R})\]
We extend it to $d$-dimensional manifolds with boundary as follows. Let $M$ be a $d$-manifold with boundary, and $e\colon \partial M\times(-\infty,0]\stackrel{\cong}{\longrightarrow}U_e\subset M$ be a collar of $\partial M$. Here $U_e$ is an open neighborhood of $\partial M$, and $e$ identifies $\partial M\times 0$ with $\partial M$. The collar induces a smooth structure on
\[M_{<\infty}:=M\coprod_{\partial M}\partial M\times[0,\infty)\]
and we define
\[\Psi(M):=\Psi(M_{<\infty})\]
Different choices of collars define diffeomorphic manifolds $M_{<\infty}$, and therefore homeomorphic $\Psi(M)$. The collar $e$ also defines a restriction map 
\[\res\colon\Psi(M)\longrightarrow \Psi(\partial M)\]
that sends an element $G\in \Psi(M_{<\infty})$ to 
\[\res(G)=G|_{\partial M}=\widetilde{e}^\ast(G|_{U_e\coprod_{\partial M}\partial M\times[0,\infty)})\]
where $\widetilde{e}\colon\partial M\times \mathbb{R}\stackrel{\cong}{\longrightarrow} U_e\coprod_{\partial M}\partial M\times[0,\infty)$ is the extension of $e$ by the identity on $\partial M\times[0,\infty)$.
For different collars we get the same restriction map under the homeomorphisms above.

There is a new sheaf $\Psi^\ast\colon {\emb_d}^{op}\longrightarrow \Top$ defined by
\[\Psi^\ast(M)=\Gamma(\fr(M)\times_{GL_d}\Psi(\mathbb{R}^d)\longrightarrow M)\]
where $\fr(M)$ is the principal $GL_d$-bundle of framings of $TM$, and $\Gamma$ denotes the space of smooth sections of a bundle.
We extend $\Psi^\ast$ in the same way to lower dimensional manifolds and to collared manifolds with boundary.

The sheaf $\Psi^{\ast}$ comes equipped with a map $\Psi\stackrel{h}{\longrightarrow}\Psi^\ast$ defined as follows. Let $\rho\colon TM\longrightarrow M$ be a fiberwise embedding induced by the exponential map. It can be defined rescaling a ball of $T_xM$ for all $x\in M$ with a radius varying continuously with $x$. We associate to a $G\in \Psi(M)$ the section $s_G\colon M\longrightarrow \fr(M)\times_{GL_d}\Psi(\mathbb{R}^d)$ defined by
\[s_G(x)=[\phi_x,{\phi_x}^\ast\rho_{x}^\ast(G)]\]
for a choice of framing $\phi_x\colon\mathbb{R}^d\stackrel{\cong}{\longrightarrow}T_xM$.

Given a "boundary condition" $g_0\in\Psi(\partial M)$ define
\[\Psi(M;g_0)=\colim_{\epsilon>0}\{G\in\Psi(M):\ G|_{\partial M\times(-\epsilon,\infty)}=g_0|_{\partial M\times(-\epsilon,\infty)}\}\]
the space of elements of $\Psi(M)$ that restrict to $g_0$ in a neighborhood of $\partial M\times [0,\infty)$.
Here we used the notation $G|_{\partial M\times I}:=\res(G)|_{\partial M\times I}$ for an open $I\subset \mathbb{R}$.
The restriction of the h-principle map $h$ induces a map \[\Psi(M;g_0)\longrightarrow\Psi^\ast(M;h(g_0))\]
Our goal is to see when this map is a weak equivalence. We consider the following family of "orthogonal" boundary conditions $g_0\in\Psi(\partial M)$.

\begin{defn}\label{transverse} Let $U\subset M_{<\infty}$ and $I\subset \mathbb{R}$ be open subsets with $\partial M\times I\subset U$. An element $G\in\Psi(U)$ is \textbf{orthogonal to $\partial M$ on $I$}, denoted $G\bot_I$, if for every open intervals $J,J'\subset I$ and any orientation preserving diffeomorphism $\alpha\colon J'\longrightarrow J$
\[G|_{J'}=\alpha^{\ast}(G|_{J})\]
Here $\alpha^\ast\colon \Psi(\partial M\times J)\longrightarrow\Psi(\partial M\times J')$ is
the map induced by $\id\times\alpha\colon \partial M\times I\longrightarrow \partial M\times J$.

Define $\Psi(U)^{\bot_I}\subset\Psi(U)$ to be the subspace of elements that are orthogonal to $\partial M$ on $I$ 
\end{defn}

\begin{theorem}\label{main}
Let $\Psi\colon\emb_{d}^{op}\longrightarrow \Top$ be a continuous sheaf, and $M$ a $d$-dimensional manifold with boundary. Suppose that the following conditions hold:
\begin{enumerate}
\item The h-principle maps $\Psi(M)\longrightarrow \Psi^\ast(M)$ and $\Psi(\partial M)\longrightarrow \Psi^\ast(\partial M)$ are weak equivalences,
\item The orthogonal elements are almost open in $\Psi(M)$ (see \ref{transverse} and \ref{almostopen}),
\item $\Psi$ is group like at $M$ (see \ref{gplike}),
\item $\Psi$ is damping at $M$ (see \ref{damping}).
\end{enumerate}
Then for any $g_0\in \Psi(\partial M)^{\bot_{\mathbb{R}}}$ the relative h-principle map $\Psi(M;g_0)\longrightarrow \Psi^\ast(M;h(g_{0}))$ is a weak equivalence.
\end{theorem}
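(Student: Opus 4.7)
The plan is to follow the strategy sketched in the introduction: construct topological categories $\C$ and $\partial\C$ modelled on the Galatius--Madsen--Tillmann--Weiss cobordism category, build a restriction functor between them, identify the classifying spaces with $\Psi(M)$ and $\Psi(\partial M)$, and then apply Quillen's theorem B$'$ to read off the homotopy fibre of the restriction map. The objects of $\C$ should be elements of $\Psi(\partial M\times\mathbb{R})$ that are orthogonal to $\partial M$ on some open interval, while a morphism in $\C$ from $g_0$ to $g_1$ is an element of $\Psi(M)$ restricting to $g_i$ in collar neighbourhoods of the two boundary components; the category $\partial\C$ is the analogous construction one dimension lower, and restriction along the collar defines a continuous functor $\C\to\partial\C$.

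The first technical step is to establish the equivalences $B\C\simeq\Psi(M)$ and $B\partial\C\simeq\Psi(\partial M)$, in the spirit of \cite{david} and \cite{osmonoids}. The damping hypothesis allows one to deform an arbitrary $G\in\Psi(M)$ to one that is orthogonal to $\partial M$ on an open dense collection of levels, producing a simplex in the nerve of $\C$ that realises $G$. The almost-openness of the orthogonal locus ensures that inclusion of orthogonal elements induces a weak equivalence on the relevant level spaces, so that a semi-simplicial resolution collecting all orthogonal slicings of a given element computes the correct homotopy type. The same argument (with $M$ replaced by $\partial M\times\mathbb{R}$) produces $B\partial\C\simeq\Psi(\partial M)$, and the comparison is compatible with the restriction functor.

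Next, I would apply the form of Quillen's theorem B$'$ given in \cite{wald} to the functor $\C\to\partial\C$. The group-like hypothesis produces, for every morphism of $\C$, a dual morphism whose composite lies in the path component of the identity in the morphism space, which is precisely the condition needed to conclude that the inclusion of the strict fibre category over a fixed object $g_0$ into the appropriate homotopy fibre is a weak equivalence. Unpacking definitions, this strict fibre classifies exactly those morphisms whose boundary restriction is equal to $g_0$ on a collar, so its classifying space is $\Psi(M;g_0)$. Hence the homotopy fibre of $\res\colon\Psi(M)\to\Psi(\partial M)$ over $g_0$ is $\Psi(M;g_0)$.

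Finally, the theorem follows by combining with hypothesis~(1). The h-principle sits in a commutative square
\[
\xymatrix{
\Psi(M) \ar[r]^{h} \ar[d]_{\res} & \Psi^\ast(M) \ar[d]^{\res} \\
\Psi(\partial M) \ar[r]^{h} & \Psi^\ast(\partial M)
}
\]
in which the horizontal arrows are weak equivalences. Since $\Psi^\ast$ is a sheaf of sections of a fibre bundle, the right-hand restriction map is a Serre fibration and its fibre over $h(g_0)$ is $\Psi^\ast(M;h(g_0))$ by definition. The induced map on homotopy fibres $\Psi(M;g_0)\to\Psi^\ast(M;h(g_0))$ is then a weak equivalence by the five lemma applied to the long exact sequences of homotopy groups. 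I expect the main obstacle to be the identification $B\C\simeq\Psi(M)$: packaging damping and almost-openness into the abstract sheaf-theoretic language so that the scanning/semi-simplicial resolution argument of \cite{david} runs verbatim is the delicate part, whereas the Quillen B$'$ step is conceptually clean once the categories and the group-like structure are properly in place.
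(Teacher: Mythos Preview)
Your overall strategy matches the paper's exactly: model the restriction map by a functor $\C\to\partial\C$, identify the classifying spaces with $\Psi(M)$ and $\Psi(\partial M)$, apply Quillen~B$'$ to identify the homotopy fibre with $\Psi(M;g_0)$, and then compare with the fibration on the $\Psi^\ast$ side via hypothesis~(1). The final square and the five-lemma step are exactly as in the paper.

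However, you have the roles of the hypotheses swapped, and this is not cosmetic. In the paper the identification $B\C\simeq\Psi(M)$ uses \emph{only} almost-openness (condition~(2)): one passes through an intermediate poset $\D$ built from the open sets $U_a$ and applies a Segal-style \'etale argument (Proposition~\ref{segaltrick}); no deformation of elements and no damping is needed. Conversely, the Quillen~B$'$ step is \emph{not} handled by group-likeness alone. Because one must work with the simplicial category $\sing_\bullet\partial\C$, there are two kinds of comparison maps between fibres: those induced by morphisms of $\partial\C$ (``first kind'') and those induced by simplicial operators $[p]\to[q]$ (``second kind''). Group-likeness handles the first kind, but the second kind --- comparing the fibre over a $q$-simplex $f\colon\Delta^q\to\Psi(\partial M)^{\bot_{\mathbb{R}}}$ with the fibre over a vertex $\eta_0^\ast f$ --- requires precisely the damping families $B^{\phi,\psi}$ to build the simplicial homotopies (Propositions~\ref{secondkindzero} and~\ref{secondkind}). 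So your expectation that ``the Quillen~B$'$ step is conceptually clean once group-like structure is in place'' is where the argument would actually stall; damping belongs there, not in the classifying-space identification.
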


\begin{rem} By a theorem of Gromov \cite{gromov} the first condition is satisfied if $M$ is an open manifold and the sheaf $\Psi$ is microflexible at $M$ (that is, if the inclusions of compact pairs $K\subset K'\subset M$ induce microfibrations $\Psi(K')\longrightarrow \Psi(K)$ of quasi-topological spaces. See \cite{oscar} for a formulation in terms of lifting properties for topological spaces).
\end{rem}

\begin{proof}
Consider the commutative digram
\[\xymatrix{ \Psi(M;g_0)\ar[d]\ar[r]^-{h} & \Psi^\ast(M;h(g_0))\ar[d]\\
\Psi(M)\ar[d]_{res}\ar[r]^-{h} & \Psi^\ast(M)\ar[d]^{res}\\
\Psi(\partial M)\ar[r]^-{h} & \Psi^\ast(\partial M)
}\]
By the first condition, the homotopy fibers of the restriction maps over the element $g_0$ are weakly equivalent. The right-hand restriction map is a fibration, and therefore its homotopy fiber is equivalent to the preimage of $g_0$, that is equivalent to the space $\Psi^\ast(M;h(g_0))$.

It remains to identify the homotopy fiber of the left-hand restriction map with $\Psi(M;g_0)$. In §\ref{sec2} we define a simplicial functor $\res\colon\C\longrightarrow\partial\C$ and using the second condition we define weak equivalences
\[\xymatrix{B\C \ \simeq \ \Psi(M)\ar@<6ex>[d]\ar@<-6ex>[d]\\
B\partial\C\simeq\Psi(\partial M)
}\]
(see \ref{catmodel}).
In §\ref{sec3} we use the third and the fourth conditions to prove that the simplicial functor $\res\colon\C\longrightarrow\partial\C$ satisfies the hypothesis of a Quillen theorem B for simplicial categories (cf. \ref{firstkind} and \ref{secondkind}). This theorem B gives a description of the homotopy fiber of the realization of our functor as the realization of a certain category. We identify the classifying space of this category with $\Psi(M;g_0)$ in \ref{identifhtpyfib} and \ref{htpyfiber}.
\end{proof}

\begin{ex}\label{sheafpsik}
The main example of a sheaf that satisfies the hypothesis above is $\Psi_k\colon {\emb_d}^{op}\longrightarrow \Top$ from \cite{soren} and \cite{osmonoids}. It associates to a $d$-manifold $M$ the set $\Psi_k(M)$ of closed subsets $N\subset M$ which are smooth $k$-dimensional submanifolds without boundary, suitably topologized.

For an embedding $e\colon M\longrightarrow M'$, the induced map $e^\ast\colon\Psi_k(M')\longrightarrow\Psi_k(M)$ sends a submanifold $N\subset M'$ to $e^{-1}(N)$. The sheaf gluing is given by taking unions.

The condition for a submanifold $G\in\Psi_k(\partial M)$ to be orthogonal to $\partial M$ over $I$ corresponds to orthogonality in the classical sense. That is, $G\bot_I$ exactly when the intersection $G\cap\partial M\times I$ is a product manifold $N\times I$ for some $(k-1)$-dimensional submanifold $N\subset\partial M$.

In section \ref{4aut} we show that $\Psi_k$ satisfies the conditions of \ref{main} above, and we relate theorem \ref{main} for $\Psi_k$ with the Madsen-Weiss theorem of \cite{soren}.
\end{ex}


\section{Categorical model for $\Psi(M)\longrightarrow\Psi(\partial M)$}\label{model}\label{sec2}

In this section we prove the following.
\begin{prop}\label{catmodel} Let $M$ be a collared manifold of dimension $d$. There is a commutative diagram
\[\xymatrix{B\C\ar[d]_{res}&B\D\ar[d]_{res}\ar[l]_{\simeq}^\vartheta\ar[r]_-{\pr}& \Psi(M)\ar[d]_{res}\\
B\partial \C& B\partial \D\ar[l]^{\partial\vartheta}_{\simeq}\ar[r]_-{\pr}&\Psi(\partial M)
}\]
where the left vertical map is the realization of a functor of topological categories and the maps $\vartheta$ and $\partial \vartheta$ are weak equivalences.

If the orthogonal elements of $\Psi$ are almost open at $M$ (cf. \ref{almostopen} below) all the horizontal maps are weak equivalences.
\end{prop}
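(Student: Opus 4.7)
The plan is to follow the Galatius--Madsen--Tillmann--Weiss template in the form refined by Ayala~\cite{david}. I would define $\C$ as the topological category whose objects are pairs $(a,x)$ with $a\in\mathbb{R}$ and $x\in\Psi$ defined on an open neighborhood of the slice $\partial M\times\{a\}$ inside $M_{<\infty}$, with $x$ orthogonal to $\partial M$ on an open interval containing $a$; a morphism $(a,x)\to(b,y)$ with $a<b$ is an element of $\Psi$ on a slab based on $\partial M\times[a,b]$ (suitably thickened into the collar) restricting to $x$ and $y$ near the ends. The enlarged simplicial category $\D$ has as $p$-simplices pairs $(a_0\leq\cdots\leq a_p,\,G)$ with $G\in\Psi(M)$ orthogonal to $\partial M$ on an open interval around each $a_i$. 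The boundary versions $\partial\C$ and $\partial\D$ are defined by the same recipe with $M$ replaced by $\partial M\times\mathbb{R}$. The functor $\vartheta\colon\D\to\C$ restricts $G$ to the slabs between consecutive heights, and $\pr\colon\D\to\Psi(M)$ forgets the heights. Commutativity of the full diagram is automatic from these definitions, since boundary restriction commutes with slab-cutting and with forgetting heights.

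For the unconditional equivalences $\vartheta$ and $\partial\vartheta$, I would argue levelwise: a $p$-simplex of the nerve of $\C$ is a chain of heights $a_0<\cdots<a_p$ together with slab data compatible at each interior height, and the sheaf axiom combined with the orthogonality condition at each $a_i$ allows one to glue these slabs into a single element on $\partial M\times[a_0,a_p]$ together with the collar. Extending further to a global section on $M_{<\infty}$ gives a space of choices that is weakly contractible by the sheaf gluing property. Hence each map $\D_p\to N_p\C$ is a weak equivalence, and so is its geometric realization; the argument for $\partial\vartheta$ is identical.

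For the horizontal projections $\pr$ under the almost-open hypothesis, the idea is that the homotopy fiber of $\pr$ over $G\in\Psi(M)$ is modelled by the nerve of the filtered poset of finite tuples of open intervals on which $G$ is orthogonal to $\partial M$, and this nerve is contractible whenever the collection of such intervals is cofinal in $\mathbb{R}$. In families one needs a parameterized version: given a map from a compact polyhedron $K\to\Psi(M)$, one wants to replace it by a homotopic map lifting through $\pr$, i.e.\ one that carries compatible orthogonal-height data. The almost-open property of $\Psi(U)^{\bot_I}\hookrightarrow\Psi(U)$ should allow pointwise perturbation of $K$ into orthogonal position on prescribed small intervals, and a partition-of-unity argument over an open cover of $K$ glues these local perturbations into a global lift.

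The main obstacle is precisely this last step: translating the formal almost-open hypothesis into a genuine homotopy lifting statement for $\pr$. Once this is in place, the right-hand squares follow, and combining with the levelwise equivalences $\vartheta,\partial\vartheta$ yields the full diagram of weak equivalences stated in the proposition.
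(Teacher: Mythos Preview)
Your definition of $\C$ diverges from the paper's in a way that creates a real gap in the $\vartheta$ argument. You take objects of $\C$ to be germs near a slice and morphisms to be slab data on $\partial M\times[a,b]$; with that definition your $\vartheta\colon\D\to\C$ forgets the global section $G\in\Psi(M)$ and retains only its restriction to a collar slab. To invert this you then need to ``extend further to a global section on $M_{<\infty}$'', and you assert the space of such extensions is weakly contractible by the sheaf gluing property. That is not what the sheaf axiom says: gluing gives uniqueness of a section once compatible local data on a \emph{cover} are specified, not existence or contractibility of extensions from a collar region into the interior of $M$. For $\Psi_k$ with $M=D^{d-1}\times\mathbb{R}$, for instance, the space of such extensions is the space of $k$-submanifolds of the interior with prescribed boundary behavior, which is far from contractible. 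The paper sidesteps this entirely by building the global data into $\C$: an object is a pair $(a,G)$ with $G\in\Psi(M)$ already defined on all of $M_{<\infty}$ and orthogonal on $(a-\epsilon,\infty)$; indeed $\C$ is the Grothendieck construction $\partial\C\wr F$ for the ``extensions of a boundary condition'' functor. With this definition $\mathcal{N}_k\C$ is a \emph{subspace} of $\mathcal{N}_k\D$ (those $(a_0\leq\cdots\leq a_k,G)$ that are orthogonal on all of $(a_k-\epsilon,\infty)$ rather than only near $a_k$), the inclusion is a section for $\mathcal{N}_k\vartheta$, and a one-parameter family of embeddings $(a_k-\epsilon,\infty)\hookrightarrow(a_k-\epsilon,\infty)$ stretching the collar furnishes the homotopy back to the identity. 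No extension problem ever arises.

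For $\pr$ your partition-of-unity sketch points in the right direction but does not engage the almost-open hypothesis precisely. The paper factors $B\D\to\Psi(M)$ through the realization of the poset $\mathcal{E}(M)=\coprod_a U_a$, where the $U_a$ are the genuine \emph{open} subsets supplied by Definition~\ref{almostopen}; condition (3) there makes $B\D\to B\mathcal{E}(M)$ a levelwise equivalence on nerves, and then one applies Segal's criterion: the projection $\mathcal{N}_k\mathcal{E}(M)\to\Psi(M)$ is \'etale (because each $U_a$ is open and the real coordinate is discrete) with fiber over $G$ the nerve of the nonempty totally ordered set $\{a:G\in U_a\}$, hence contractible. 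This is how the hypothesis is actually consumed.
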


Define a topological category $\partial\C$ with objects space
\[\Ob\partial\C=\coprod_{a\in\mathbb{R}}\Psi(\partial M)^{\bot_{\mathbb{R}}}\]
The disjoint union is meant to indicate the fact that we are taking the discrete topology on the real coordinate.
For real numbers $0<\epsilon$ and $a\leq b$ define
\[\Mor_\epsilon(a,b)=\Psi(\partial M)^{\bot_{(-\infty,a+\epsilon)}}\cap \Psi(\partial M)^{\bot_{(b-\epsilon,\infty)}}\]
and let the morphism space of $\partial\C$ be
\[\Mor\partial\C=\coprod_{a\leq b}\colim\limits_{0<\epsilon}\Mor_\epsilon(a,b)\]
The source map of $\partial\C$ is induced by the maps
\[s_\epsilon \colon \Mor_\epsilon(a,b)\longrightarrow \Ob\partial\C\]
defined by sending $G$ to
\[s_\epsilon(G)=G|_{(-\infty,a+\epsilon)}\cup\alpha^{\ast}(G|_{(a-\epsilon,a+\epsilon)})\]
in the $a$-component,
for a choice of diffeomorphism $\alpha\colon (a-\epsilon,\infty)\longrightarrow (a-\epsilon,a+\epsilon)$ preserving the orientation.
By the orthogonality condition on the elements of $\Mor_\epsilon(a,b)$ the map $s_\epsilon$ is independent of the choice of $\alpha$, and it is continuous by continuity of the sheaf. Similarly one defines the target map as the colimit of maps
\[t_\epsilon(G)=\beta^{\ast}(G|_{(b-\epsilon,b+\epsilon)})\cup G|_{(b-\epsilon,\infty)}\]
for a choice of orientation preserving diffeomorphism $\beta\colon (-\infty,b+\epsilon)\longrightarrow(b-\epsilon,b+\epsilon)$.
Composition in $\partial\C$ is induced by the maps
\[\circ\colon\Mor_\epsilon(b,c)\times_{\Ob\partial\C}\Mor_{\epsilon}(a,b)\longrightarrow \Mor_{\epsilon}(a,c)\]
defined by
\[G\circ H=H|_{(-\infty,b-\epsilon)}\cup G|_{(b-\epsilon,\infty)}\]

\begin{rem} This is an "embedded cobordism category" for the sheaf $\Psi$, with objects embedded in $\partial M$ and cobordism direction given by the collar. For $\Psi=\Psi_k$ the sheaf of submanifolds of \cite{osmonoids}, and $M=\mathbb{R}^d\times (-\infty,0]$, the category $\partial\mathcal{C}_{\psi_k}(\mathbb{R}^{d}\times (-\infty,0])$ is almost the same as the cobordism category $\mathcal{C}_{k,d}$ of \cite{GMTW}. The difference is that our bordisms are not necessarily compact submanifolds of $\mathbb{R}^{d+1}$.
\end{rem}

Consider now the topological category $\C$ with objects
\[\Ob\C=\coprod_{a\in\mathbb{R}}\colim_{0<\epsilon}\Psi(M)^{(a-\epsilon,\infty)}\]
and morphisms
\[\Mor\C=\coprod_{a\leq b}\colim_{0<\epsilon}\Psi(M)^{\bot_{(a-\epsilon,a+\epsilon)}}\cap\Psi(M)^{\bot_{(b-\epsilon,\infty)}}\]
The source is induced by sending $G$ in the $a\leq b$ component to
\[s_\epsilon(G)=G|_{M_{<a+\epsilon}}\cup \alpha^{\ast}(G|_{(a-\epsilon,a+\epsilon)})\]
in the $a$-component, for an orientation preserving diffeomorphism $\alpha\colon (a-\epsilon,\infty)\longrightarrow (a-\epsilon,a+\epsilon)$. Here $M_{<u}\subset M_{<\infty}$ is the submanifold
\[M_{<u}= M_{<\infty}\backslash (\partial M\times [u,\infty))\]
The target projects off the $a$-coordinate, i.e. it sends $G$ in the $a\leq b$ component of the morphisms space to $G$ in the $b$-component of the objects space. Composition of $(b,c,G)$ and $(a,b,H)$ is defined to be just $(a,c,G)$ (notice that if $(b,c,G)$ and $(a,b,H)$ are composable then $H|_{M_{<b}}=G|_{M_{<b}}$).

The restriction map $\Psi(M)\longrightarrow \Psi(\partial M)$ induces a continuous functor \[\C\longrightarrow\partial\C\]

\begin{rem} Consider the functor $F\colon\partial\C\longrightarrow \Top$ that sends an object $(a,g)\in \Ob\partial\C$ to the set of extensions of $g$ to $M$
\[F(a,g):=\colim_{0<\epsilon} \{G\in \Psi(M)|G|_{(a-\epsilon,\infty)}=g|_{(a-\epsilon,\infty)}\}\]
and a morphism $(a\leq b,G)\in \Mor\partial\C$ to the map 
\[(-)|_{M_{<a+\epsilon}}\cup G|_{(a-\epsilon,\infty)}\colon F(a,s(G))\longrightarrow F(b,t(G))\]
The category $\C$ is the Grothendieck category $\partial\C\wr F$, and the restriction functor $\C\longrightarrow\partial\C$ corresponds to the projection functor $\partial\C\wr F\longrightarrow\partial\C$.
\end{rem}

The topological categories $\D$ and $\partial\D$ are associated to topological posets, defined as the spaces
\[\D=\coprod_{a\in\mathbb{R}}\colim_{0<\epsilon}\Psi(M)^{\bot_{(a-\epsilon,a+\epsilon)}}\]
and
\[\partial\D=\coprod_{a\in\mathbb{R}}\colim_{0<\epsilon}\Psi(\partial M)^{\bot_{(a-\epsilon,a+\epsilon)}}\]
with partial orders defined by $(a,G)\leq(b,H)$ if $a\leq b$ and $G=H$, on both spaces. The restriction map induces a map of topological posets
\[\res\colon \D\longrightarrow \partial\D\]

There is a functor $\partial\vartheta\colon \partial\D\longrightarrow \partial\C$ that sends a morphism $(a\leq b,G)$ with $G\in\Psi(\partial M)^{\bot_{(a-\epsilon,a+\epsilon)}}\cap\Psi(\partial M)^{\bot_{(b-\epsilon,b+\epsilon)}}$ to
\[\partial\vartheta(a\leq b,G)=(a\leq b,\alpha^{\ast}(G|_{(a-\epsilon,a+\epsilon)})\cup G|_{(a-\epsilon,b+\epsilon)}\cup\beta^{\ast}(G|_{(b-\epsilon,b+\epsilon)}))\]
for diffeomorphisms $\alpha\colon (-\infty,a+\epsilon)\longrightarrow(a-\epsilon,a+\epsilon)$ and $\beta\colon(b-\epsilon,\infty)\longrightarrow (b-\epsilon,b+\epsilon)$ preserving the orientation.
Similarly, there is a functor \[\vartheta\colon \D\longrightarrow \C\] that sends a morphism $(a\leq b,G)$ with $G\in \Psi( M)^{\bot_{(a-\epsilon,a+\epsilon)}}\cap\Psi( M)^{\bot_{(b-\epsilon,b+\epsilon)}}$ to 
\[\vartheta(a\leq b,G)=(a\leq b,G|_{(-\infty,b+\epsilon)}\cup\beta^{\ast}(G|_{(b-\epsilon,b+\epsilon)}))\]

\begin{prop}\label{cd} The square
\[\xymatrix{B\C\ar[d]_{\res}&B\D\ar[l]^{\vartheta}\ar[d]^{\res}\\
B\partial\C&B\partial\D\ar[l]^{\partial\vartheta}
}\]
commutes and the maps $\vartheta$ and $\partial \vartheta$ are weak homotopy equivalences.
\end{prop}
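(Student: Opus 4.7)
The plan is to handle commutativity and the weak equivalence statements separately.

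For commutativity, I would evaluate both composites on a morphism $(a\leq b, G)\in \Mor\D$. Using that $\vartheta(a\leq b,G)$ modifies $G$ past $b+\epsilon$ by $\beta^\ast$, that $\res\colon\C\to\partial\C$ restricts to $\partial M$ and then extends by $\alpha^\ast$ past $a-\epsilon$ (to meet the stronger orthogonality required for $\partial\C$-morphisms, which are orthogonal on all of $(-\infty,a+\epsilon)$), and that $\res\colon\D\to\partial\D$ is simple restriction, one checks that both $\res\circ\vartheta$ and $\partial\vartheta\circ\res$ yield
\[\Bigl(a\leq b,\ \alpha^\ast((G|_{\partial M})|_{(a-\epsilon,a+\epsilon)})\cup (G|_{\partial M})|_{(a-\epsilon,b+\epsilon)}\cup\beta^\ast((G|_{\partial M})|_{(b-\epsilon,b+\epsilon)})\Bigr).\]
The key point is that restriction to $\partial M$ only affects the transverse direction, while $\alpha$ and $\beta$ act only on the collar coordinate, so they commute with restriction and with the sheaf gluing.

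For the weak equivalence of $\vartheta$ (the argument for $\partial\vartheta$ is identical, replacing $\Psi(M)$ by $\Psi(\partial M)$), I would argue level-wise on nerves. Using the composition rule $(b,c,G)\circ(a,b,H)=(a,c,G)$ together with the formulas for source and target, an inductive unpacking shows that a $p$-simplex of $N\C$ is determined by heights $a_0\leq\dots\leq a_p$ and a single element $G\in\Psi(M)$ which must lie in $\bigcap_{i<p}\Psi(M)^{\bot_{(a_i-\epsilon,a_i+\epsilon)}}\cap\Psi(M)^{\bot_{(a_p-\epsilon,\infty)}}$. The poset nerve $N_p\D$ admits the analogous description with the last clause replaced by the weaker $\Psi(M)^{\bot_{(a_p-\epsilon,a_p+\epsilon)}}$. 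Thus $N_p\C$ sits inside $N_p\D$ as a subspace $\iota_p$, and $\vartheta_p$ is the retraction replacing $G$ past $a_p+\epsilon$ by the orthogonal continuation $\beta^\ast(G|_{(a_p-\epsilon,a_p+\epsilon)})$; in particular $\vartheta_p\circ\iota_p=\id$.

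To produce a continuous deformation $\iota_p\circ\vartheta_p\simeq\id_{N_p\D}$, I would use a smooth isotopy $\beta_t\colon(a_p-\epsilon,\infty)\hookrightarrow(a_p-\epsilon,\infty)$ of orientation preserving embeddings, chosen constant near $a_p-\epsilon$, equal to the identity at $t=0$, and collapsing $(a_p-\epsilon,\infty)$ onto $(a_p-\epsilon,a_p+\epsilon)$ at $t=1$. Setting $H_t(G)=G|_{(-\infty,a_p+\epsilon/2)}\cup\beta_t^\ast(G|_{\im\beta_t})$ interpolates continuously between $G$ at $t=0$ and $\vartheta_p(G)$ at $t=1$. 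The essential use of orthogonality is that because $G$ is orthogonal on $(a_p-\epsilon,a_p+\epsilon)$, whenever $\beta_t$ sends a point of that interval back into it the pullback of $G$ agrees with $G$ itself, so each $H_t(G)$ remains orthogonal near every $a_i$ (including $a_p$) and therefore stays in $N_p\D$.

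The main obstacle is keeping the construction continuous as $G$, the heights, and the implicit $\epsilon$ in the colimit all vary simultaneously. Continuity follows from $\Psi$ being continuous for the $C^\infty$-topology together with a smooth choice of the family $\beta_t$ depending on the parameters; a standard partition-of-unity argument produces such a choice. Once $\vartheta_p$ is shown to be a level-wise weak equivalence of semi-simplicial spaces, geometric realization yields the weak equivalence $\vartheta\colon B\D\to B\C$, and the identical construction applied on the $\partial M$-side yields $\partial\vartheta\colon B\partial\D\to B\partial\C$.
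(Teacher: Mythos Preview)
Your approach is essentially the same as the paper's: identify $\mathcal{N}_p\C$ as a subspace of $\mathcal{N}_p\D$ via a section $\iota_p$ of $\vartheta_p$, then use an isotopy of embeddings of $(a_p-\epsilon,\infty)$ into itself (interpolating between the identity and the collapse onto $(a_p-\epsilon,a_p+\epsilon)$) to build a levelwise homotopy $\iota_p\circ\vartheta_p\simeq\id$. The paper dismisses commutativity in a word, while you spell it out; otherwise the arguments coincide.

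One remark: the obstacle you flag at the end---continuity as the heights $a_0\leq\dots\leq a_p$ vary---is not actually present. By definition the disjoint unions in $\D$, $\partial\D$, $\C$, $\partial\C$ carry the \emph{discrete} topology on the real coordinates, so each nerve level decomposes as a coproduct over fixed height sequences and you only need the homotopy to be continuous in $G$ for each fixed $(a_0,\dots,a_p)$. No partition-of-unity argument is needed; a single choice of the family $\beta_t$ (depending on the fixed $a_p$ and a representative $\epsilon$) suffices.
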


\begin{proof} The square clearly commutes. We show that $\vartheta$ is a weak equivalence, the same result for $\partial \vartheta$ is completely analogous. We show that the nerve of $\vartheta$ is a levelwise homotopy equivalence of simplicial spaces
\[\mathcal{N}_{\sbt}\vartheta\colon \mathcal{N}_{\sbt}\D\longrightarrow\mathcal{N}_{\sbt}\C\]
An element of $\mathcal{N}_k\C$ is represented by a sequence of real numbers $a_0\leq\dots\leq a_k$, a $0<\epsilon$ and a $G\in\Psi(M)$ with $G\bot_{(a_i-\epsilon,a_i+\epsilon)}$ all $0\leq i\leq k-1$ and $G\bot_{(a_k-\epsilon,\infty)}$. This clearly defines an element of $\mathcal{N}_{k}\D$, and actually a natural (non-simplicial) inclusion
\[\iota_k\colon\mathcal{N}_k\C\longrightarrow\mathcal{N}_{k}\D\]
which is a section for $\mathcal{N}_{k}\vartheta$.
The composite $\iota_k\circ\mathcal{N}_{k}\vartheta$ sends an element $(a_0\leq \dots\leq a_k,G)$ for $G\in \Psi(M)$ with $G\bot_{(a_i-\epsilon,a_i+\epsilon)}$ to
\[(a_0\leq\dots\leq a_k,G|_{(-\infty,a_{k}+\epsilon)}\cup \alpha^{\ast}(G|_{(a_k-\epsilon,a_k+\epsilon)}))\]
for a diffeomorphism $\alpha\colon (a_k-\epsilon,\infty)\longrightarrow(a_k-\epsilon,a_k+\epsilon)$ that preserves the orientation.
Take a continuous family of embeddings $\alpha_{t}\colon(a_k-\epsilon,\infty)\longrightarrow (a_k-\epsilon,\infty)$, for $t\in [0,1]$, satisfying
\begin{enumerate}
\item $\im(\alpha_{t})=(0,a_{k}+\epsilon+\frac{t}{1-t})$
\item $\alpha_{0}=\alpha$
\item $\alpha_{1}=\id$
\end{enumerate}
The embedding $\alpha_t$ induces a continuous map $\mathcal{N}_{k}\D\longrightarrow \mathcal{N}_{k}\D$
by sending $(a_0\leq\dots\leq a_k,G)$ to
\[(a_0\leq\dots\leq a_k,G|_{(-\infty,a_{k}+\epsilon)}\cup \alpha_{t}^{\ast}(G|_{(a_k-\epsilon,a_k+\epsilon)}))\]
These maps assemble into a homotopy
\[\mathcal{N}_{k}\D\times [0,1]\longrightarrow\mathcal{N}_{k}\D\]
from $\iota_k\circ\mathcal{N}_{k}\vartheta$ to the identity.
\end{proof}

For a real number $a\in\mathbb{R}$ we denote 
\[\Psi(\partial M)^{\bot a}:=\colim_{0<\epsilon}\Psi(\partial M)^{\bot_{(a-\epsilon,a+\epsilon)}}\]
and define $\Psi(M)^{\bot a}$ in a similar way.

\begin{defn}\label{almostopen}
We say that \textbf{orthogonal elements of $\Psi$ are almost open at $M$} if there is an open cover $\{U_a\}_{a\in\mathbb{R}}$ of $\Psi(M)$ such that
\begin{enumerate}
\item $\Psi(\partial M)^{\bot_{(a-\epsilon,a+\epsilon)}}\subset U_a$ for every $\epsilon>0$,
\item The image of $U_a$ by the restriction map $U_a|_{\partial M}\subset \Psi(\partial M)$ is open,
\item For every finite sequence $a_0\leq\dots\leq a_k$ the maps induced by the inclusions
\[\cap_{i=0}^k\Psi(M)^{\bot a_i}\longrightarrow \cap_{i=0}^kU_{a_i}\]
and 
\[\cap_{i=0}^k\Psi(\partial M)^{\bot a_i}\longrightarrow \cap_{i=0}^kU_{a_i}|_{\partial M}\]
are weak equivalences.
\end{enumerate}
\end{defn}

\begin{ex} In the case of the sheaf $\Psi_k$, one can essentially choose $U_a$ to be the subspace of $\Psi(M)$ of submanifolds that intersect $\partial M\times a$ transversally. These sets $U_a$ are open only when $M$ is compact. For non-compact $M$ one needs to modify this sets slightly (cf. \ref{transalmostopen}).
\end{ex}

\begin{prop}\label{segaltrick} Suppose that orthogonal elements of $\Psi$ are almost open at $M$.
Then the projections from $\D$ to $\Psi(M)$ and from $\partial\D$ to $\Psi(\partial M)$ define a commutative square
\[\xymatrix{B\D\ar[r]^{\simeq}\ar[d]_{res}&\Psi(M)\ar[d]^{res}\\
B\partial\D\ar[r]_{\simeq}&\Psi(\partial M)
}\]
where the horizontal maps are weak equivalences.
\end{prop}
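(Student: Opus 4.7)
The plan is to factor each horizontal map through a ``thickened'' topological poset built from the open cover $\{U_a\}$, and then apply a Segal-type argument on the second factor. Introduce $\widetilde{\D}$ as the topological poset with object space $\coprod_{a\in\mathbb{R}}U_a$ (disjoint over the discrete variable $a$) and order $(a,x)\leq(b,y)$ iff $a\leq b$ and $x=y$; define $\widetilde{\partial\D}$ analogously with objects $\coprod_{a\in\mathbb{R}}U_a|_{\partial M}$, which is valid because each $U_a|_{\partial M}$ is open in $\Psi(\partial M)$ by condition (2) of \ref{almostopen}. The inclusions $\Psi(M)^{\bot a}\hookrightarrow U_a$ and $\Psi(\partial M)^{\bot a}\hookrightarrow U_a|_{\partial M}$ assemble into functors of topological posets $\D\to\widetilde{\D}$ and $\partial\D\to\widetilde{\partial\D}$, commuting with restriction and with the augmentations to $\Psi(M)$ and $\Psi(\partial M)$. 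The proposition thus reduces to showing the equivalences $B\D\xrightarrow{\simeq} B\widetilde{\D}\xrightarrow{\simeq}\Psi(M)$ and their boundary analogues; commutativity of the final square is then automatic from functoriality.

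For the first stage the nerves decompose as $\mathcal{N}_k\widetilde{\D}=\coprod_{a_0\leq\dots\leq a_k}\bigcap_{i=0}^k U_{a_i}$ and $\mathcal{N}_k\D=\coprod_{a_0\leq\dots\leq a_k}\bigcap_{i=0}^k \Psi(M)^{\bot a_i}$, so condition (3) of \ref{almostopen} furnishes a levelwise weak equivalence $\mathcal{N}_\bullet\D\to\mathcal{N}_\bullet\widetilde{\D}$. A standard realization lemma then gives $B\D\xrightarrow{\simeq} B\widetilde{\D}$, with the same argument on the boundary side; the Reedy-goodness required is immediate since the face maps are open inclusions up to homotopy.

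The second stage $B\widetilde{\D}\to\Psi(M)$ is the main obstacle. By construction $\mathcal{N}_\bullet\widetilde{\D}$ is the (ordered, non-strict) nerve of the open cover $\{U_a\}_{a\in\mathbb{R}}$ of $\Psi(M)$, augmented to $\Psi(M)$ by the canonical map. The fiber over any $G\in\Psi(M)$ is the classifying space of the non-empty totally ordered set $\{a\in\mathbb{R}:G\in U_a\}$ and is hence contractible. To promote this pointwise contractibility to a global weak equivalence the plan is to invoke the classical Segal argument for nerves of open covers: construct a partition of unity subordinate to $\{U_a\}$ (exploiting the real parameter $a$ and the $\epsilon$-control implicit in \ref{almostopen} to produce continuous bump functions), and use it to build a simplex-by-simplex contraction, in the spirit of the realization results of \cite{osmonoids}. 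The same argument applied to the cover $\{U_a|_{\partial M}\}$ of $\Psi(\partial M)$ yields the lower horizontal equivalence, completing the proof.
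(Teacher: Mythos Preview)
Your overall strategy matches the paper's exactly: factor through the intermediate poset built from the open cover $\{U_a\}$ (the paper calls it $\mathcal{E}(M)$, you call it $\widetilde{\D}$), use condition~(3) of Definition~\ref{almostopen} to get a levelwise equivalence on nerves for the first factor, identify the fiber over $G$ as the nerve of the totally ordered set $\{a:G\in U_a\}$, and then argue that the nerve of the open cover realizes to the space.

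The difference is in how the second factor $B\widetilde{\D}\to\Psi(M)$ is finished. You propose a partition-of-unity argument, but the justification you offer---``the $\epsilon$-control implicit in \ref{almostopen}''---is not actually available: Definition~\ref{almostopen} only asserts that the $U_a$ form an open cover with certain homotopical properties on finite intersections, and says nothing that would let you build bump functions varying continuously in the uncountable parameter $a$, nor does it give paracompactness of $\Psi(M)$. The paper instead exploits the fact that the real coordinate is \emph{discrete}: this makes the projection $\mathcal{N}_k\mathcal{E}(M)\to\Psi(M)$ \'etale (an open map and a local homeomorphism), and together with contractibility of the fibers one can invoke Segal's lemma \cite{segal} (as formulated in \cite{soren}) directly. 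This route is cleaner and needs no numerability hypothesis. Your partition-of-unity idea can be salvaged by testing against maps from compact spaces and passing to finite subcovers---indeed the paper does essentially that later, in Lemma~\ref{topdiscronfib}---but as written, and with the justification you give, this step is a gap.
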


\begin{proof}

The projections $\D\longrightarrow\Psi(M)$ and $\partial\D\longrightarrow\Psi(\partial M)$ factor trough the topological posets $\mathcal{E}(M)=\coprod_{a\in\mathbb{R}}U_a$ and $\partial\mathcal{E}(M)=\coprod_{a\in\mathbb{R}}U_a|_{\partial M}$ with partial orders again defined as $(a,G)\leq(b,H)$ if $a\leq b$ and $G=H$. The inclusions induce weak equivalences $B\D\simeq B\mathcal{E}(M)$ and $B\partial\D\simeq B\partial\mathcal{E}(M)$. Therefore it is enough to show that the inclusions $U_a\subset \Psi(M)$ induce weak equivalences 
\[B\mathcal{E}(M) \stackrel{\simeq}{\longrightarrow}\Psi(M) \ \ \ \mbox{ and } \ \ \ B\partial\mathcal{E}(M)\stackrel{\simeq}{\longrightarrow}\Psi(\partial M)\]

The nerves of $\Psi(M)$ and $\Psi(\partial M)$ are constant simplicial spaces. By \cite{soren}*{§3.4}, which is a corollary of \cite{segal}*{§A1}, it is enough to show that the projections $\mathcal{N}_{\sbt}\mathcal{E}(M)\longrightarrow \Psi(M)$ and $\mathcal{N}_{\sbt}\partial\mathcal{E}(M)\longrightarrow\Psi(\partial M)$ are levelwise étale (that is, are open maps and local homeomorphisms) and that their fibers have contractible realizations. We show the result only for $\mathcal{E}(M)\longrightarrow\Psi(M)$, the other case being completely analogous.

The fiber $F_{\sbt}$ over a fixed element $G\in\Psi(M)$ is the simplicial space with $k$-simplicies the space
\[F_k=\{(a_0\leq\dots\leq a_k)|G\in U_{a_i}\mbox{ for all }0\leq i\leq k\}\]
Notice that since the topology on the real coordinates is discrete, this a discrete space.
Since the sets $U_a$ cover $\Psi(M)$, the spaces $F_k$ are non-empty. Moreover $F_{\sbt}$ is the nerve of the totally ordered set $\{a\in\mathbb{R}| G\in U_a\}$, and therefore its realization is contractible.

It remains to show that the projections at levelwise étale. Let us show that it is an open map.
Since the real coordinates are discrete, an open subset of $\mathcal{N}_k\mathcal{E}(M)$ is a union of sets of the form $\{(a_0\leq\dots\leq a_k)\}\times V$, with $V$ open in $\cap_{i=0}^kU_{a_i}$. Its image by the projection map is the union of the sets $V$. Since $V$ is open in $\cap_{i=0}^kU_{a_i}$ and $\cap_{i=0}^kU_{a_i}$ is open in $\Psi(M)$, the image $V$ is open in $\Psi(M)$. It remains to show that the projection is locally injective. Since the real coordinate is discrete, the set $\{(a_0\leq\dots\leq a_k)\}\times \cap_{i=0}^kU_{a_i}$ is an open neighborhood of a general element $((a_0\leq\dots\leq a_k),G)\in \mathcal{N}_k\mathcal{E}(M)$. The restriction of the projection to $\{(a_0\leq\dots\leq a_k)\}\times \cap_{i=0}^kU_{a_i}$ is injective.
\end{proof}


\section{The homotopy fiber of $\Psi(M)\longrightarrow \Psi(\partial M)$}\label{sec3}

In this section we use the categorical model for the restriction map $\Psi(M)\longrightarrow\Psi(\partial M)$ described in the previous section to show the following.
\begin{prop}\label{identifhtpyfib}
Let $M$ be a collared $d$-dimensional manifold. Suppose that the elements of $\Psi(M)$ are almost open, and $\Psi$ is group-like and damping at $M$ (cf. \ref{gplike} and \ref{damping} below). Then the homotopy fiber of $\Psi(M)\longrightarrow\Psi(\partial M)$ over an element $g\in\Psi(\partial M)^{\bot_{\mathbb{R}}}$ is naturally weakly equivalent to
\[\Psi(M;g):=\colim_{0<\epsilon}\{G\in\Psi(M): \ G|_{(-\epsilon,\infty)}=g|_{(-\epsilon,\infty)}\}\]
\end{prop}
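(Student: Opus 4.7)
The strategy is to use Proposition \ref{catmodel} to replace $\Psi(M)\to\Psi(\partial M)$ by the realization of the topological functor $\res\colon\C\to\partial\C$, and then to compute the homotopy fiber over the vertex $g$ by a Quillen Theorem B' argument. The crucial structural observation is the one recorded after the definition of $\C$: the category $\C$ is exactly the topological Grothendieck construction $\partial\C\wr F$, where $F\colon\partial\C\to\Top$ is the functor sending $(a,g_0)$ to the colimit over $\epsilon>0$ of the spaces of $G\in\Psi(M)$ agreeing with $g_0$ on $\partial M\times(a-\epsilon,\infty)$, and under this identification $\res$ is literally the projection. Translating by the collar parameter gives a natural homeomorphism $F(a,g)\cong\Psi(M;g)$ for every $a\in\mathbb{R}$, so the target space in the statement is, up to a canonical identification, the value of $F$ at any lift of the basepoint.

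The heart of the plan is then to apply Quillen's Theorem B' of \cite{wald} to the projection from this Grothendieck construction. The two auxiliary conditions that Theorem B' demands (the conditions labelled \ref{firstkind} and \ref{secondkind} in the proof of Theorem \ref{main}) are what the hypotheses "group-like at $M$" and "damping at $M$" are designed to encode: the group-like condition guarantees that every morphism of $\partial\C$ acts on the fibers of $F$ by a weak equivalence, which for a Grothendieck construction is the standard hypothesis turning the strict fiber into the honest homotopy fiber; the damping condition supplies the homotopy-coherent extension of local sections needed to replace paths in $B\partial\C$ by honest zig-zags of morphisms in $\partial\C$, which is what makes the simplicial Theorem B' applicable in the topological enrichment. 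I would verify both hypotheses by unpacking the relevant under-category nerves on $\res$ and pulling back the group-like inversions and damping homotopies term by term.

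Once Theorem B' is in force, the homotopy fiber of $B\res$ over $g$ is identified with the classifying space of the strict fiber category, which by the Grothendieck description collapses to $F(a,g)\cong\Psi(M;g)$ for any chosen $a$. The step I expect to be the main obstacle is the naturality of this identification: I must track the equivalence through the zig-zag $B\C\xleftarrow{\simeq} B\D\xrightarrow{\simeq}\Psi(M)$ of Proposition \ref{catmodel} so that the composite map $\Psi(M;g)\to\text{hofib}_{g}(\Psi(M)\to\Psi(\partial M))$ is the one induced by the evident inclusion $\Psi(M;g)\subset\Psi(M)$, since this is precisely the map needed to close the diagram chase in the proof of Theorem \ref{main}. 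Concretely, this requires inspecting the explicit section $\iota_k$ and the straight-line homotopy built in the proof of Proposition \ref{cd}, together with the cover $\{U_a\}$ of \ref{almostopen}, to confirm that the resulting comparison map is the expected one; no new homotopical input should be required beyond the four structural hypotheses.
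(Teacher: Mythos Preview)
Your overall strategy matches the paper's: replace the restriction map by $B\res\colon B\C\to B\partial\C$ via Proposition~\ref{catmodel}, pass to $\sing_\bullet$ to get a simplicial functor, apply Waldhausen's Quillen~B', and verify the first- and second-kind hypotheses from the group-like and damping conditions. That part is correct.

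There is, however, a genuine gap in your identification of the fiber. Quillen~B' does not hand you the value $F(a,g)$ of the Grothendieck functor; it hands you the classifying space of the comma category $(0,g)/\res$. In the paper's setup this is $\sing_\bullet F_{(0,g)}$, where
\[
F_{(0,g)}=\coprod_{a\leq 0}\colim_{0<\epsilon}\bigl\{G\in\Psi(M)^{\bot_a}: G|_{(-\epsilon,\infty)}=g|_{(-\epsilon,\infty)}\bigr\},
\]
a topological \emph{poset} indexed by $a\leq 0$, each of whose pieces consists only of those elements of $\Psi(M;g)$ that are orthogonal at $a$. This does not ``collapse'' to $\Psi(M;g)$ for formal Grothendieck-construction reasons: one must prove that the projection $BF_{(0,g)}\to\Psi(M;g)$ is a weak equivalence. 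That is the content of Proposition~\ref{htpyfiber}, and it uses the almost-open hypothesis via the same Segal-type \'etale argument as Proposition~\ref{segaltrick}. In your sketch you invoke ``almost open'' only at the very end for naturality, but it is already essential at this earlier step.

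A smaller point: the division of labor among the three hypotheses is less clean than you suggest. Damping is used not only for the second-kind maps but also to reduce the first-kind hypothesis to simplicial degree zero (Proposition~\ref{secondkindzero} feeds into Proposition~\ref{firstkind}); and the almost-open condition reappears in Lemma~\ref{topdiscronfib}, which is needed inside the first-kind argument to pass between the discrete and standard topologies on the real coordinate.
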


The conditions on the sheaf $\Psi$ above allows us to identify the homotopy fiber of the restriction functor $\C\longrightarrow \partial \C$ over an object $(g,0)$ as the space $\Psi(M;g)$. This is carried out by means of Quillen theorem B for simplicial categories, proved by Waldhausen in \cite{wald}. We recall its content.

Let $F\colon C\longrightarrow D$ be a simplicial functor. Given an object $Y_q\in\Ob D_q$, define the (right) fiber of $F$ over $Y_q$ by the simplicial category $Y_q/F$, whose level $n$ category has objects
\[\Ob(Y_q/F)_n=\coprod_{\nu\colon [n]\longrightarrow[q]}\{(X\in\Ob C_n, b\in\Mor D_n)|b\colon F(X)\longrightarrow \nu^\ast Y_q\}\]
The morphisms from $(X,b)$ to $(X',b')$ are morphisms $a\colon X\longrightarrow X'$ of $C_n$ making the diagram
\[\xymatrix{F(X)\ar[d]_{F(a)}\ar[r]^b &\nu^\ast Y_q\\
F(X')\ar[ur]_{b'}}\]
commutative.

There are two kind of maps between the fibers over the objects of $D$. A map of the first kind is a map $\beta_\ast\colon Y_q/F\longrightarrow Y'_q/F$ induced by a morphism $\beta\colon Y_q\longrightarrow Y'_q$ of $D_q$. It maps a morphism $a\colon (X,b)\longrightarrow (X',b')$ of $(Y_q/F)_n$ to the morphism $a\colon (X,\nu^\ast(\beta)\circ b)\longrightarrow (X',\nu^\ast(\beta)\circ b')$ of $(Y'_q/F)_n$
\[\xymatrix{F(X)\ar[d]_{F(a)}\ar[r]^b &\nu^\ast Y_q\ar[r]^{\nu^\ast\beta}&\nu^\ast Y'_q\\
F(X')\ar[ur]_{b'}}\]

A map of the second kind is a map $\alpha_\ast\colon \alpha^\ast( Y_q)/F\longrightarrow Y_q/F$ induced by a morphism $\alpha\colon [p]\longrightarrow [q]$ in $\Delta$. It maps a morphism $a\colon (X,b)\longrightarrow (X',b')$ in the $\nu\colon[n]\longrightarrow[p]$ component of $(\alpha^\ast( Y_q)/F)_n$ to $a\colon (X,b)\longrightarrow (X',b')$ in the $\alpha\circ\nu$ component of $( Y_q/F)_n$.
\begin{prop}[\cite{wald}]\label{quillenB}
Let $F\colon C\longrightarrow D$ be a simplicial functor for which all the maps of the first and the second kind induce weak equivalences on the classifying space. The for every object $Y\in D_0$ the diagram
\[\xymatrix{B(Y/F)\ar[d]\ar[r]&BC\ar[d]^{BF}\\
\ast\ar[r]_{Y}&BD
}\]
is homotopy cartesian.
\end{prop}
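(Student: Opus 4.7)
The plan is to follow the classical proof of Quillen's Theorem B, adapted to the simplicial-categorical setting as in Waldhausen \cite{wald}. The strategy is to interpose an auxiliary simplicial Grothendieck-type category $\Gamma$ between $C$ and $D$, so that the first projection $\Gamma\to C$ induces a homotopy equivalence on classifying spaces while the second projection $\Gamma\to D$ is a quasi-fibration whose fiber over $Y\in\Ob D_0$ is $B(Y/F)$.

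Concretely, define $\Gamma$ to have as level-$n$ objects the triples $(X,Y,b)$ with $X\in\Ob C_n$, $Y\in\Ob D_n$ and $b\colon F(X)\to Y$ a morphism of $D_n$, and as morphisms the compatible pairs $(a,\beta)\in\Mor C_n\times\Mor D_n$ making the evident square commute. The first projection $p\colon\Gamma\to C$ admits a section $s(X)=(X,F(X),1_{F(X)})$, and the morphisms $(1_X,b)\colon s(p(X,Y,b))\to(X,Y,b)$ assemble into a natural transformation $s\circ p\Rightarrow\id_\Gamma$; hence $Bp$ is a homotopy equivalence and the second projection $q\colon\Gamma\to D$ is an equivalent model for $BF$.

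The next step is to identify the categorical fibers of $q$: for each $Y_q\in\Ob D_q$, the natural simplicial subcategory of $\Gamma$ whose $n$-objects are triples $(X,\nu^\ast Y_q,b\colon F(X)\to\nu^\ast Y_q)$ indexed by $\nu\colon[n]\to[q]$ is, by construction, exactly Waldhausen's $Y_q/F$. The maps of the first kind $\beta_\ast$ and of the second kind $\alpha_\ast$ encode the induced identifications between these categorical fibers: the former as $Y_q$ varies inside a fixed $D_q$ via morphisms, the latter as the simplicial degree $q$ varies under simplicial operators in $\Delta$. The hypothesis of the proposition is therefore precisely the statement that all such transition maps between fibers of $q$ become weak equivalences after realization.

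The final and most delicate step is a quasi-fibration argument: I would apply a realization lemma of Bousfield-Friedlander type to the bisimplicial map $N_\bullet q\colon N_\bullet\Gamma\to N_\bullet D$, concluding that the homotopy fiber of $Bq$ over $Y\in\Ob D_0$ coincides with $B(Y/F)$. The main obstacle here is to verify the properness/Reedy conditions underlying such a realization lemma, in a form where the hypothesis on first- and second-kind maps is exactly the required input; managing the interaction of the two simplicial directions (the nerve of $C$, $D$ and $\Gamma$ on one side, the intrinsic simplicial direction of the simplicial categories on the other) is the delicate bookkeeping. Once this is done, the square in the statement is homotopy cartesian for every $Y\in D_0$, as claimed.
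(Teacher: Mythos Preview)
The paper does not give its own proof of this proposition: it is simply quoted from Waldhausen \cite{wald} and used as a black box. Your outline is essentially the standard argument (and indeed is the one Waldhausen gives): introduce the comma-type simplicial category $\Gamma=F\!\downarrow\! D$, deformation-retract it onto $C$ via the unit section, and then show that the projection $\Gamma\to D$ has homotopy fiber $B(Y/F)$ by a bisimplicial quasi-fibration argument in which the hypotheses on first- and second-kind maps are precisely the input needed. So there is nothing to compare against in the paper, and your sketch is on the right track; the only honest caveat is the one you already flag, namely that the realization/quasi-fibration step requires care with the two simplicial directions and is where the actual work lies.
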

We briefly recall that the classifying space of a simplicial category is defined as the realization of the bisimplicial set obtained by taking the nerve levelwise. This realization is defined as the realization of the simplicial space obtained by levelwise realizing one of the simplicial directions. This is the same as realizing the diagonal simplicial set.

We want to apply this Quillen B theorem to describe the homotopy fibers of the restriction functor $B\C\longrightarrow B\partial \C$. The singular functor $\sing_{\sbt}$ from topological spaces to simplicial sets induces a functor from topological categories to simplicial categories. Given a topological category $C$, define a simplicial category $\sing_{\sbt}C$ in simplicial degree $k$ by $\Ob\sing_kC:=\sing_k\Ob C$ and morphisms $\Mor\sing_kC:=\sing_k\Mor C$. Source, target and composition maps are defined pointwise. In our situation, this construction induces a simplicial functor
\[\res\colon\sing_{\sbt}\C\longrightarrow \sing_{\sbt}\partial \C\]
Before showing that under appropriate conditions this functor satisfies theorem B, let us identify the classifying space of the fiber over an object $(b,g)\in\sing_0\partial\C=\partial C$. Let $F_{(b,g)}$ be the topological poset with underlying space
\[F_{(b,g)}=\coprod_{a\leq b}\colim_{0<\epsilon}\{G\in\Psi(M)^{\bot_a}|G|_{(b-\epsilon,\infty)}=g|_{(b-\epsilon,\infty)}\}\] 
and partial order defined by $(a,G)\leq (a',H)$ if $a\leq a'$ and $G=H$. 
 Notice that by definition $(b,g)/\res=\sing_{\sbt}F_{(b,g)}$.

\begin{prop}\label{htpyfiber} If orthogonal elements of $\Psi(M)$ are almost open, for every $g\in\Psi(\partial M)^{\bot_{\mathbb{R}}}$ there is a natural zig-zag of weak equivalences
\[\xymatrix{B((0,g)/\res)\ar[r]_-{\simeq}& BF_{(0,g)}\ar[r]_-{\simeq}^-{\pr}&\Psi(\partial M;g)}\]
where the right-hand map is defined by projecting off the real coordinate.
\end{prop}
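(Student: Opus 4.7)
The strategy is to produce a zig-zag consisting of (a) a comparison between the singular simplicial model of the fiber and the underlying topological poset $F_{(0,g)}$, and (b) a Segal-type recognition that the projection off the real coordinate is a weak equivalence, following the template of Proposition \ref{segaltrick}.

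For the first map, recall from just before the statement that $(0,g)/\res = \sing_\bullet F_{(0,g)}$. Since the counit $|\sing_\bullet X| \to X$ is a weak equivalence for every topological space $X$, applied levelwise to the nerve $\mathcal{N}_k F_{(0,g)}$ it yields a levelwise weak equivalence of simplicial spaces whose realization produces the required equivalence $B((0,g)/\res) \simeq BF_{(0,g)}$. Realization preserves levelwise weak equivalences between good simplicial spaces, which is the case here since all the spaces involved are disjoint unions of colimits of subspaces of $\Psi(M)$.

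For the second map, I would follow closely the method of \ref{segaltrick}. Introduce the almost-open enlargement
\[\mathcal{E}(M;g) = \coprod_{a\leq 0}\colim_{0<\epsilon}\{G\in U_a \mid G|_{(-\epsilon,\infty)}=g|_{(-\epsilon,\infty)}\}\]
with partial order $(a,G)\leq (a',H)$ iff $a\leq a'$ and $G=H$. Condition (3) of almost-openness implies that the inclusion $F_{(0,g)}\hookrightarrow \mathcal{E}(M;g)$ induces a weak equivalence on the nerve in each simplicial degree, hence on classifying spaces. It remains to show that the projection $B\mathcal{E}(M;g)\to \Psi(M;g)$ is a weak equivalence, and by \cite{soren}*{\S 3.4} this reduces to verifying that the map $\mathcal{N}_\bullet \mathcal{E}(M;g)\to \Psi(M;g)$ (target regarded as a constant simplicial space) is levelwise étale and has fibers with contractible realizations.

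The étale verification is a direct transcription of the argument in \ref{segaltrick}, using that the real coordinates are discrete and that $\bigcap_i U_{a_i}$ is open in $\Psi(M)$. The fiber over $G\in\Psi(M;g)$ is the nerve of the totally ordered set $P_G=\{a\leq 0 \mid G\in U_a\}$. The hypothesis $g \in \Psi(\partial M)^{\bot_{\mathbb{R}}}$, together with $G|_{(-\epsilon,\infty)}=g|_{(-\epsilon,\infty)}$, forces $G \in \Psi(M)^{\bot_{(a-\delta,a+\delta)}}$ for every $a\in(-\epsilon,0]$ and sufficiently small $\delta$, so by condition (1) of almost-openness $G\in U_a$ for such $a$, and hence $P_G$ is non-empty; being totally ordered and non-empty, its nerve is contractible. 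The main obstacle is exactly this non-emptiness, which is precisely where the full orthogonality hypothesis on $g$ is used; the rest is a straight translation of \ref{segaltrick} to the sliced setting.
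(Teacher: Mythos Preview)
Your proposal is correct and follows essentially the same approach as the paper: the paper's proof simply notes that $(b,g)/\res=\sing_{\sbt}F_{(b,g)}$ so that the evaluation map gives the first equivalence, and then declares the second equivalence ``by the same argument of \ref{segaltrick}''. You have carried out exactly that argument in the sliced setting, introducing the relative poset $\mathcal{E}(M;g)$ and repeating the \'etale/contractible-fiber verification, and your explicit remark on how the hypothesis $g\in\Psi(\partial M)^{\bot_{\mathbb{R}}}$ guarantees non-emptiness of the fiber poset $P_G$ is a point the paper leaves implicit (and only spells out later in the proof of Lemma~\ref{topdiscronfib}).
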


\begin{proof}
As we noticed, $(b,g)/\res=\sing_{\sbt}F_{(b,g)}$ and therefore the evaluation map induces a topological functor
\[|(b,g)/\res|\longrightarrow F_{(b,g)}\]
that is a levelwise equivalence on the nerve. The projection map $\pr\colon F_{(0,g)}\longrightarrow\Psi(\partial M;g)$ is an equivalence by the same argument of \ref{segaltrick}.
\end{proof}

We now see under which assumptions on $\Psi$ the restriction functor satisfies the conditions of Quillen's theorem \ref{quillenB}.

For $a\in\mathbb{R}$ denote 
\[\Psi(\partial M)^{\bot_a}=\colim_{0<\epsilon}\Psi(\partial M)^{\bot_{(a-\epsilon,a+\epsilon)}}\]
Given a sequence $a_0\leq\dots\leq a_k$ denote $\Psi(\partial M)^{\bot_{a_0,\dots, a_k}}:=\cap_{i=0}^k\Psi(\partial M)^{\bot_{a_i}}$, and define $\Psi(M)^{\bot_a}$ and $\Psi(M)^{\bot_{a_0,\dots, a_k}}$ analogously. 
For every real number $u$ denote $t_u\colon \mathbb{R}\longrightarrow \mathbb{R}$ the diffeomorphism $t_u(x)=x+u$

\begin{defn}\label{damping}
The sheaf $\Psi$ is \textbf{damping at $M$} if given any $f\colon\Delta^q\longrightarrow \Psi(\partial M)^{\bot_{\mathbb{R}}}$ there is a family $\{B^{\phi,\psi}\colon\Delta^{n}\longrightarrow\Psi(\partial M)|\phi,\psi\colon[n]\longrightarrow [q]\}$ such that
\begin{enumerate}[i)]
\item $B^{\phi,\psi}(\sigma)|_{(-\infty,\epsilon)}=\phi^{\ast}f(\sigma)|_{(-\infty,\epsilon)}$
\item $t_{1}^{\ast}(B^{\phi,\psi}(\sigma)|_{(1-\epsilon,\infty)})=\psi^{\ast}f(\sigma)|_{(-\epsilon,\infty)}$
\item $B^{\phi,\phi}=\phi^{\ast}f$
\item $\omega^{\ast}B^{\phi,\psi}=B^{\omega^{\ast}\phi,\omega^{\ast}\psi}$ for all $\omega\colon [m]\longrightarrow [n]$
\end{enumerate}
\end{defn}

\begin{rem}
Here's an illustration of this strange damping condition. Given $f\colon\Delta^q\longrightarrow \Psi(\partial M)^{\bot_{\mathbb{R}}}$ and $\phi,\psi\colon [n]\longrightarrow[q]$, consider the map $H^{\phi,\psi}\colon\Delta^n\times[0,1]\longrightarrow \Psi(\partial M)^{\bot_{\mathbb{R}}}$ defined by
\[H^{\phi,\psi}(x,t):=f(\phi_\ast(x)\cdot t+\psi_{\ast}(x)\cdot(1-t))\]
This is a homotopy from $\phi^{\ast}f$ to $\psi^{\ast}f$, and this family satisfies \[\omega^{\ast}H_{t}^{\phi,\psi}=H_{t}^{\omega^{\ast}\phi,\omega^{\ast}\psi}\]
for all $\omega\colon [m]\longrightarrow [n]$ and fixed $t\in[0,1]$. A family $B^{\phi,\psi}\colon\Delta^{n}\longrightarrow\Psi(\partial M)$ for the damping condition exists if each element $H^{\phi,\psi}(x,t)\in \Psi(\partial M)^{\bot_{\mathbb{R}}}$ is determined by the restriction to a neighborhood of $\partial M\times t$ of $B^{\phi,\psi}(x)$. We show that the sheaf of submanifolds $\Psi_k$ satisfies this property in \ref{psikdamping}, by means of a smooth approximation theorem.
\end{rem}

\begin{prop}\label{secondkindzero}
Suppose that $\Psi$ is damping at $M$. Then for every object $Y_q$ of $\sing_q\partial\C$ the map the second kind
\[(\eta_0)_\ast\colon \eta_{0}^{\ast}Y_q/\res\longrightarrow Y_q/\res\]
induced by the value zero map $\eta_0\colon [0]\longrightarrow [q]$ is a weak equivalence on classifying spaces.
\end{prop}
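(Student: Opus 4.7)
The strategy is to present $B(Y_q/\res)$ as $|\Delta^q|\times B(\eta_0^\ast Y_q/\res)$, so that $B(\eta_0)_\ast$ becomes the inclusion of the vertex factor $\{0\}\times B(\eta_0^\ast Y_q/\res)$ and hence a weak equivalence by contractibility of $|\Delta^q|$. The damping family $\{B^{\phi,\psi}\}$ associated (via Definition \ref{damping}) to the underlying map $f\colon\Delta^q\to\Psi(\partial M)^{\bot_\mathbb{R}}$ of $Y_q$ supplies this identification. At simplicial level $n$, I first observe that $(Y_q/\res)_n$ decomposes as a coproduct $\coprod_{\nu\colon[n]\to[q]}\mathcal{K}_\nu^{(n)}$ of comma categories of $\sing_n\res$ over $\nu^\ast Y_q$, since a morphism between $(X,b)$ and $(X',b')$ requires a common target $\nu^\ast Y_q$. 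The constant-zero summand $\mathcal{K}_{\eta_0\eta_n}^{(n)}$ is exactly $(\eta_0^\ast Y_q/\res)_n$, and $(\eta_0)_\ast$ is the inclusion of this summand.

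Next I define a simplicial retraction $r\colon Y_q/\res\to\eta_0^\ast Y_q/\res$ at level $n$ by
\[
r(\nu,X,b) \;=\; \bigl(X,\ B^{\nu,\eta_0\eta_n}\circ b\bigr),
\]
where the damping cobordism $B^{\nu,\eta_0\eta_n}$ is post-composed with $b$ as a morphism in $\sing_n\partial\C$ (after translating in the $\mathbb{R}$-coordinate of $\partial\C$ so that its source height matches that of $\nu^\ast Y_q$). Property (iv), $\omega^\ast B^{\nu,\eta_0\eta_n} = B^{\nu\omega,\eta_0\eta_m}$, makes $r$ a simplicial functor, and property (iii) gives $r\circ(\eta_0)_\ast = \id$. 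The components $\Phi_\nu := r|_{\mathcal{K}_\nu^{(n)}}$ produce an identification $(Y_q/\res)_n \simeq (\Delta^q)_n \times (\eta_0^\ast Y_q/\res)_n$ natural in $n$. Passing to classifying spaces and realizing in the $n$-direction (using that realization commutes with products of a simplicial set and a simplicial space) yields $B(Y_q/\res)\simeq |\Delta^q|\times B(\eta_0^\ast Y_q/\res)$, under which $B(\eta_0)_\ast$ becomes the inclusion of $\{0\}\times B(\eta_0^\ast Y_q/\res)$, the desired weak equivalence.

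The main obstacle lies in justifying that each $\Phi_\nu$ actually induces a weak equivalence $B\mathcal{K}_\nu^{(n)}\to B\mathcal{K}_{\eta_0\eta_n}^{(n)}$, so that the product identification above is genuine rather than merely a functor between the two coproducts. Since $\Phi_\nu$ is post-composition with the cobordism $B^{\nu,\eta_0\eta_n}$, which admits no strict inverse in $\partial\C$ (the reverse cobordism $B^{\eta_0\eta_n,\nu}$ composed with $B^{\nu,\eta_0\eta_n}$ yields only a loop cobordism, not the identity morphism), one likely needs a secondary homotopy in the $p$-direction of the nerve, assembled from damping cobordisms at higher simplicial levels, to trivialize this loop at the classifying-space level. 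The bookkeeping of the height shift introduced by $B^{\nu,\eta_0\eta_n}$ --- which requires composition with appropriate translation morphisms in $\partial\C$ to land in the correct comma category --- is a further wrinkle but is routine once the damping identities (i)--(ii) are unwound.
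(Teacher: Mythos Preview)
Your proposal has a genuine gap, and you correctly locate it yourself: the product identification $B(Y_q/\res)\simeq |\Delta^q|\times B(\eta_0^\ast Y_q/\res)$ hinges on each $\Phi_\nu$ (post-composition with the damping cobordism $B^{\nu,\eta_0\eta_n}$) being a weak equivalence on comma categories. But showing that post-composition by a fixed cobordism is an equivalence is precisely what the \emph{group-like} condition (Definition~\ref{gplike}) is for, and in this proposition only the damping hypothesis is available. Your suggested fix of composing with the reverse cobordism $B^{\eta_0\eta_n,\nu}$ and then trivializing the resulting loop would again require some invertibility statement you do not have; there is no reason the loop $B^{\eta_0\eta_n,\nu}\circ B^{\nu,\eta_0\eta_n}$ should be nullhomotopic rel ends from damping alone.

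The paper avoids this entirely by working in the other bisimplicial direction. Fix the nerve degree $k$ and show that
\[
\mathcal{N}_k(\eta_0)_\ast\colon \mathcal{N}_k(\eta_0^\ast Y_q/\res)_{\sbt}\longrightarrow \mathcal{N}_k(Y_q/\res)_{\sbt}
\]
is a simplicial homotopy equivalence in the $n$-variable. The retraction $\xi$ is defined as you do, gluing on $B^{\phi,\eta_0\circ p}$ (together with a rescaling $s_{\underline{a}}$ to keep the orthogonality points in place). The crucial idea you are missing is the simplicial homotopy: for a homotopy parameter $b\colon[n]\to[1]$ one sends $(\phi,\underline{a};G)$ to $(\phi\cdot b,\underline{a};H_b(G))$, gluing on $B^{\phi,\phi\cdot b}$. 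Here $\phi\cdot b$ is pointwise multiplication, so as $b$ runs from the constant $1$ to the constant $0$ one interpolates from $\phi$ to $\eta_0\circ p$. Property (iii) of damping, $B^{\phi,\phi}=\phi^\ast f$, makes the $b\equiv 1$ end just a rescaling of the identity (undone by a second homotopy sliding $s_{\underline{a}}$ back to $\id$), and property (iv) makes $H$ simplicial. No invertibility of cobordisms is ever invoked; the ``damping'' of $\phi$ to zero is achieved through the simplicial parameter $b$ rather than through an inverse morphism.
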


\begin{proof} 

We show that for a fixed $k$ the map of simplicial sets
\[\mathcal{N}_k(\eta_0)_\ast\colon \mathcal{N}_k(\eta_{0}^{\ast}Y_q/\res)_{\sbt}\longrightarrow \mathcal{N}_k(Y_q/\res)_{\sbt}\]
is a simplicial homotopy equivalence. Let us assume for simplicity that $Y_q=(0,f)$ for some $f\colon\Delta^k\longrightarrow\Psi(\partial M)^{\bot_{\mathbb{R}}}$. The argument for a general $a\neq 0$ is completely analogous.
An element of $\mathcal{N}_k(Y_q/\res)_{n}$ is a family $(\phi,\underline{a};G)$ of a sequence of real numbers $\underline{a}=(a_0\leq\dots\leq a_{k}\leq 0)$, a map $\phi\colon [n]\longrightarrow [q]$ and a continuous $G\colon \Delta^{n}\longrightarrow\Psi(\partial M)^{\bot_{a_0,\dots,a_{k}}}$ such that
\[G(\sigma)|_{(-\epsilon,\infty)}=(\phi^{\ast}f)(\sigma)|_{(-\epsilon,\infty)}\]
for all $\sigma\in\Delta^n$ and some $\epsilon>0$ depending on $\sigma$.

Pick a family $\{B^{\phi,\psi}|\phi,\psi\colon[n]\longrightarrow [q]\}$ for $f$ given by the damping condition \ref{damping}.
Define a simplicial map
\[\xi\colon \mathcal{N}_k(Y_q/\res)_{\sbt}\longrightarrow \mathcal{N}_k(\eta_{0}^{\ast}Y_q/\res)_{\sbt}\]
by sending an element $(\phi,\underline{a};G)$ as above to
\[\xi(\phi,\underline{a};G)=(p\colon [n]\longrightarrow [0],\underline{a};\xi(G))\]
where $p\colon [n]\longrightarrow [0]$ is the unique map, and $\xi(G)\colon\Delta^n\longrightarrow \Psi(\partial M)^{\bot_{a_0,\dots,a_k}}$ is defined by
\[\xi(G)(\sigma):=s_{\underline{a}}^{\ast}(G(\sigma)|_{(-\infty,\epsilon)}\cup B^{\phi,\eta_0\circ p}(\sigma)|_{(-\epsilon,1+\epsilon)})\]
Here $s_{\underline{a}}\colon \mathbb{R}\longrightarrow \mathbb{R}$ is a diffeomorphism that sends $0$ to $1$, and with the following properties. If  $\underline{a}=(a_0=\dots=a_k=0)$ simply take $s_{\underline{a}}$ to be $t_1=(-)+1$. If there is a coordinate $a_i< 0$, let $a_m$ be the biggest among them, and choose $s_{\underline{a}}$ so that it agrees with the identity in a neighborhood of $(-\infty,a_m]$, and with $t_1$ on a neighborhood of $[0,\infty)$.
Condition i) and ii) of the damping property insure that the gluing is well defined, and that $\xi(G)$ agrees with $p^\ast (\eta_{0}^\ast f)$ around $0$ as required for belonging in $\mathcal{N}_k(\eta_{0}^{\ast}Y_q/\res)_n$. 
Our choice of $s_{\underline{a}}$ guarantees that $\xi(G)(\sigma)$ is still orthogonal at $a_0,\dots a_k$. By condition iv) on $\{B^{\phi,\psi}\}$ the map $\xi$ is simplicial.

The Composite $\eta_0\circ\xi$ sends $(\phi,\underline{a};G)$ to $(\eta_0\circ p,\underline{a};\xi(G))$. Define a simplicial homotopy  
\[H\colon \mathcal{N}_kY_q/\res_{\sbt}\times\Delta[1]\longrightarrow \mathcal{N}_kY_q/\res_{\sbt}\]
by sending $(\phi,\underline{a};G)$ and $b\colon [n]\longrightarrow [1]$ to 
\[H((\phi,\underline{a};G),b)=(\phi\cdot b,\underline{a};H_b(G))\]
where $\phi\cdot b$ is pointwise multiplication of maps $[n]\longrightarrow[1]$, and
\[H_b(G)(\sigma)=s_{\underline{a}}^{\ast}(G(\sigma)|_{(-\infty,\epsilon)}\cup B^{\phi,\phi\cdot b}(\sigma)|_{(-\epsilon,1+\epsilon)})\]
This is a simplicial homotopy from $\eta_0\circ\xi$ to a map $H_1$ that sends $(\phi,\underline{a};G)$ to $(\phi,\underline{a};H_1(G))$ with
\[H_1(G)(\sigma)=s_{\underline{a}}^{\ast}(G(\sigma)|_{(-\infty,\epsilon)}\cup (\phi^{\ast}f)(\sigma)|_{(-\epsilon,1+\epsilon)})\]
Here we used that $B^{\phi,\phi}=\phi^{\ast}f$. Thus $H_1(G)$ is a scaling $G$. We define a homotopy that inflates this back to $G$. For every $\underline{a}$, take a family of diffeomorphisms $s_{\underline{a}}^{u}\colon \mathbb{R}\longrightarrow\mathbb{R}$ continuous in $u\in[0,1]$ such that $s_{\underline{a}}^{0}=s_{\underline{a}}$, $s_{\underline{a}}^{1}=\id$ and such that each $s_{\underline{a}}^{u}$ agrees with the identity on a neighborhood of $(-\infty,a_m]$, and with $t_u=(-)+u$ on a neighborhood of $[0,\infty)$.
Define $K\colon \mathcal{N}_kY_q/\res_{\sbt}\times\Delta[1]\longrightarrow \mathcal{N}_kY_q/\res_{\sbt}$ by sending 
$((\phi,\underline{a};G),b\colon[n]\longrightarrow [1])$ to $(\phi,\underline{a};K_b(G))$ with
\[K_b(G)(\sigma)=(s_{\underline{a}}^{b_\ast(\sigma)})^{\ast}(G(\sigma)|_{(-\infty,\epsilon)}\cup (\phi^{\ast}f)(\sigma)|_{(-\epsilon,1+\epsilon)})\]
where $b_\ast\colon\Delta^n\longrightarrow \Delta^1\cong[0,1]$ is the induced map.
This is a simplicial homotopy from $H_1$ to the identity.

The other composite $\xi\circ(\eta_0)_\ast$ sends $(p,\underline{a};G)$ to $(p,\underline{a};\xi(G))$ with
\[\xi(G)(\sigma)=s_{\underline{a}}^{\ast}(G(\sigma)|_{(-\infty,\epsilon)}\cup p^\ast(\eta_{0}^{\ast}f)(\sigma)|_{(-\epsilon,1+\epsilon)})\]
There is a simplicial homotopy similar to $K$ from this map to the identity.
\end{proof}

\begin{defn}\label{inverse}\label{gplike}
A \textbf{right inverse} for an element $G\in\Psi(\partial M)^{\bot_{0,1}}$ is an element $R\in\Psi(\partial M)$ such that
\begin{enumerate}[i)]
\item $R|_{(1-\epsilon,1+\epsilon)}=G|_{(1-\epsilon,1+\epsilon)}$
\item $t_{2}^{\ast}(R)|_{(-\epsilon,\epsilon)}=G|_{(-\epsilon,+\epsilon)}$ 
\item There is a path $\gamma\colon[0,1]\longrightarrow\Psi(\partial M)$ with
\begin{itemize}
\item $\gamma(0)=G|_{(-\infty,1-\epsilon)}\cup R|_{(1-\epsilon,\infty)}$
\item $\gamma(1)\in\Psi(\partial M)^{\bot_{(-\epsilon,2+\epsilon)}}$
\item $\gamma(t)|_{(-\infty,+\epsilon)}=G|_{(-\infty,+\epsilon)}$ and $\gamma(t)|_{(2-\epsilon,\infty)}=R|_{(2-\epsilon,\infty)}$
\end{itemize} 
\end{enumerate}
A \textbf{left inverse} for $G\in\Psi(\partial M)^{\bot_{0,1}}$ is a $L\in\Psi(\partial M)$ satisfying
\begin{enumerate}[i)]
\item $L|_{(-\epsilon,\epsilon)}=G|_{(-\epsilon,+\epsilon)}$
\item $t_{-2}^{\ast}(L)|_{(1-\epsilon,1+\epsilon)}=G|_{(1-\epsilon,1+\epsilon)}$ 
\item There is a path $\delta\colon[0,1]\longrightarrow\Psi(\partial M)$ with
\begin{itemize}
\item $\delta(0)=L|_{(-\infty,\epsilon)}\cup G|_{(-\epsilon,\infty)}$
\item $\delta(1)\in\Psi(\partial M)^{\bot_{(-1-\epsilon,1+\epsilon)}}$
\item $\delta(t)|_{(-\infty,-1+\epsilon)}=L|_{(-\infty,-1+\epsilon)}$ and $\gamma(t)|_{(1-\epsilon,\infty)}=G|_{(1-\epsilon,\infty)}$
\end{itemize}
\end{enumerate}

A sheaf $\Psi\colon\emb_{d}^{op}\longrightarrow \Top$ is called \textbf{group like at $M$} if every element in $G\in\Psi(\partial M)^{\bot_{(-\infty,\epsilon)}}\cap \Psi(\partial M)^{\bot_{(1-\epsilon,\infty)}}$ has both a right and a left inverse.
\end{defn}

Notice that the gluing conditions force a right inverse $R$ to belong to $\Psi(\partial M)^{\bot_{1,2}}$, and similarly $L\in \Psi(\partial M)^{\bot_{-1,0}}$. Also, if $R$ is a right inverse for $G$ then $G$ is a left inverse for $R$.

\begin{ex}
We show in \ref{mangplike} that the sheaf $\Psi_k$ is group like at $M$ if $\partial M=N\times\mathbb{R}$ for some manifold without boundary $N$ of dimension $d-2$. The idea is to define a left inverse of a submanifold $G\in\Psi_k(\partial M)^{\bot_{0,1}}$ as the flipped manifold $(-\id)^\ast G$. Similarly $(2-\id)^\ast G$ is going to be a right inverse for $G$.
\end{ex}

\begin{prop}\label{firstkind}
Suppose that $\Psi$ is group like and damping at $M$, and that orthogonal elements of $\Psi(M)$ are almost open. Then all the maps of the first kind for $\res\colon\sing_{\sbt}\C\longrightarrow \sing_{\sbt}\partial \C$ are weak equivalences.
\end{prop}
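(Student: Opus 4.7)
The plan is to imitate the strategy of the proof of \ref{secondkindzero}: given $\beta\in\Mor\sing_q\partial\C$ with source $Y_q$ and target $Y'_q$, I would exhibit a simplicial homotopy inverse $\rho\colon Y'_q/\res\to Y_q/\res$ built from the group-like data for $\beta$, together with two simplicial homotopies $\rho\circ\beta_\ast\simeq\id$ and $\beta_\ast\circ\rho\simeq\id$ built from the deforming paths $\gamma,\delta$ of the group-like definition \ref{gplike} and the collar-rescaling trick used in \ref{secondkindzero}.

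First, applying the group-like condition pointwise on $\Delta^q$ gives, for each $\sigma$, a right inverse $R_\sigma\in\Psi(\partial M)^{\bot_{1,2}}$ and a deforming path $\gamma_\sigma$. I would then use the damping condition applied to the source and target $q$-simplices $s(\beta),t(\beta)\colon\Delta^q\to\Psi(\partial M)^{\bot_{\mathbb{R}}}$ to promote the pointwise data to a continuous $q$-simplex $R\colon\Delta^q\to\Psi(\partial M)$ of right inverses, with the naturality under $\omega\colon[m]\to[n]$ required by the simplicial structure, in the same spirit in which condition iv) of \ref{damping} is used to build the map $\xi$ in the proof of \ref{secondkindzero}. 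A symmetric argument produces a family $L$ of left inverses with a family of paths $\delta$.

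Second, I would define $\rho$ on an $n$-simplex $(X,b)$ of $Y'_q/\res$, with $b\colon F(X)\to \nu^\ast Y'_q$, by
\[\rho(X,b)=\bigl(X,\,s_{\underline{a}}^\ast(\nu^\ast R\circ b)\bigr),\]
where $s_{\underline{a}}$ is the reparameterization of $\mathbb{R}$ introduced in \ref{secondkindzero}: it sends $2$ back to $0$ while being the identity near the orthogonal coordinates $a_0\leq\cdots\leq a_k$, so that orthogonality of $(X,b)$ is preserved. The composite $\rho\circ\beta_\ast$ then admits a simplicial homotopy to the identity in two steps, exactly as in \ref{secondkindzero}: first a homotopy built from $\gamma$ that deforms the concatenation $\beta\cdot R$ into an orthogonal bordism, and then a rescaling homotopy of the form $s_{\underline{a}}^u$ collapsing the resulting orthogonal band back to zero width. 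The composite $\beta_\ast\circ\rho$ is handled symmetrically using $L$ and $\delta$.

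The main obstacle is promoting the pointwise group-like data to a genuinely simplicial family in the first step: the group-like axiom alone chooses inverses only at a single element, and damping is precisely what glues these pointwise choices together into a family compatible with all simplicial structure maps, in the same way it is used for the map $\xi$ in \ref{secondkindzero}. Once this compatibility is arranged, the rest of the proof is a combination of the rescaling homotopies of \ref{secondkindzero} with the deformation paths $\gamma,\delta$ supplied by the group-like condition. The almost open hypothesis enters only indirectly, via the identifications of classifying spaces from \ref{catmodel} and \ref{segaltrick} that are used elsewhere to identify the outer diagram with the restriction map $\Psi(M)\to\Psi(\partial M)$.
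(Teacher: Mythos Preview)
Your plan has a genuine gap, and it diverges from the paper's actual argument in a way that matters.

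The paper does \emph{not} try to build a simplicial family of inverses over $\Delta^q$. Instead it first reduces to degree zero: for a morphism $\beta$ in $\sing_q\partial\C$, the square
\[
\xymatrix{Y_q/\res\ar[r]^{\beta_\ast}&Y_{q}'/\res\\
\eta_{0}^{\ast}Y_q/\res\ar[u]^{(\eta_0)_\ast}_{\simeq}\ar[r]_{(\eta_{0}^{\ast}\beta)_\ast}&\eta_{0}^{\ast}Y_{q}'/\res\ar[u]^{(\eta_0)_\ast}_{\simeq}
}
\]
commutes, and the vertical maps are equivalences by \ref{secondkindzero}. So one only has to treat a morphism $(b\leq c,G)$ in $\partial\C_0$, i.e.\ a \emph{single} element $G$, for which the group-like axiom supplies a single right inverse $R$ and a single path $\gamma$. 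No simplicial family of inverses is ever needed. After this reduction the paper passes from $F_{(b,g)}$ (discrete real coordinate) to $F^t_{(b,g)}$ (standard real coordinate) via Lemma \ref{topdiscronfib}, and there builds \emph{continuous} homotopies that shift the real coordinates $\underline{a}\mapsto\underline{a}-2+2u$; this translation is why the standard topology is required.

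Your proposed route fails at the step ``use damping to promote the pointwise group-like data to a simplicial family''. The damping axiom \ref{damping} takes a single family $f\colon\Delta^q\to\Psi(\partial M)^{\bot_{\mathbb R}}$ and produces bordisms $B^{\phi,\psi}$ between \emph{pullbacks of $f$ along simplicial maps}. It does not produce inverses of a given morphism $\beta$, nor does it produce the paths $\gamma_\sigma$ witnessing invertibility, let alone a coherent family of such paths over $\Delta^q$. The group-like axiom makes a bare existence statement at a single element, with no continuity in the choice of $(R,\gamma)$, and damping gives you no mechanism to glue those choices. Your sentence ``damping is precisely what glues these pointwise choices together'' is an assertion, not an argument; nothing in Definition \ref{damping} supports it. Relatedly, your claim that the almost-open hypothesis enters only indirectly is wrong: in the paper it is used essentially, via Lemma \ref{topdiscronfib}, to pass to the standard topology on the real coordinate so that the continuous translation homotopy $K'_u$ makes sense.
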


\begin{proof}
Let $\beta$ be a morphism in $\partial\C_q$ from $Y_q$ to $Y_{q}'$. The value zero map $\eta_0\colon [0]\longrightarrow [q]$ induces a commutative diagram of simplicial functors
\[\xymatrix{Y_q/\res\ar[r]^{\beta_\ast}&Y_{q}'/\res\\
\eta_{0}^{\ast}Y_q\ar[u]^{(\eta_0)_\ast}_{\simeq}\ar[r]_{(\eta_{0}^{\ast}\beta)_\ast}&\eta_{0}^{\ast}Y_{q}'\ar[u]^{(\eta_0)_\ast}_{\simeq}
}\]
where $\eta_{0}^{\ast}\beta$ is a morphism in $\partial\C_0$, and the vertical maps are equivalences proposition \ref{secondkindzero} above. It is therefore enough to show that the maps of the first kind induced by morphisms in degree zero are equivalences.

Let $(b,g\in\Psi(\partial M)^{\bot_{\mathbb{R}}})$ be an object of $\sing_0\partial \C=\partial\C$. Recall the topological poset
\[F_{(b,g)}=\coprod_{a\leq b}\colim_{0<\epsilon}\{H\in\Psi(M)^{\bot_{a}}|G|_{(b-\epsilon,\infty)}=g|_{(b-\epsilon,\infty)}\}\]
A morphism $\beta=(b\leq c,G)$ of $\partial\C$ from $(b,g)$ to $(c,h)$ induces a commutative diagram
\[\xymatrix{|(b,g)/\res|\ar[r]^{|\beta_{\ast}|}\ar[d]^{\simeq}&|(c,h)/\res|\ar[d]^{\simeq}\\
F_{(b,g)}\ar[r]_{G_\ast}&F_{(c,h)}
}\]
where $G_\ast$ sends a morphism $(a\leq a',H)$ of $F_{(b,g)}$ to $(a\leq a',H|_{(-\infty,b+\epsilon)}\cup G|_{(b-\epsilon,\infty)})$. The vertical maps are induced by evaluation, and are levelwise weak equivalences on the nerve as showed in \ref{htpyfiber}.
Now let $F_{(b,g)}^{t}$ be the topological poset with same underlying set as $F_{(b,g)}$, but topologized using the standard topology of $\mathbb{R}$ on the real coordinate $a\leq b$. The inclusion of $\mathbb{R}$ with the discrete topology into $\mathbb{R}$ with the standard topology induces a commutative diagram
\[\xymatrix{
F_{(b,g)}\ar[d]_{\simeq}\ar[r]_{G_\ast}&F_{(c,h)}\ar[d]^{\simeq}\\
F^{t}_{(b,g)}\ar[r]_{G_\ast}&F^{t}_{(c,h)}
}\]
where the vertical maps are equivalences on the realization by lemma \ref{topdiscronfib} below. Therefore it is enough to show that on the nerve
\[G_\ast\colon \mathcal{N}_kF^{t}_{(b,g)}\longrightarrow\mathcal{N}_kF^{t}_{(c,h)}\]
is a homotopy equivalence for all $k$. We assume for simplicity that $b=0$ and $c=1$, the general argument being a translation of this case.

Take a right inverse $R$ for $G$ in the sense of definition \ref{inverse}. Define a map
\[G^{\ast}\colon \mathcal{N}_kF^{t}_{(1,h)}\longrightarrow\mathcal{N}_kF^{t}_{(0,g)}\]
by sending $(a_0\leq\dots\leq a_k\leq 1;H)$ to $(\underline{a}-2=(a_0-2\leq\dots\leq a_k-2\leq 0);G^{\ast}(H))$ where
\[G^{\ast}(H)=t_{2}^{\ast}(H|_{(-\infty,1+\epsilon)}\cup R|_{(1-\epsilon,2+\epsilon)}\cup g|_{(2-\epsilon,\infty)})\]
The map $G^{\ast}\circ G_\ast$ sends a $(\underline{a} \leq 0;H)$ to $(\underline{a}-2\leq 0,G^{\ast}G_\ast H)$ with
\[G^{\ast}G_\ast H=t_{2}^{\ast}(H|_{(-\infty,+\epsilon)}\cup G|_{(-\epsilon,1+\epsilon)}\cup R|_{(1-\epsilon,2+\epsilon)}\cup g|_{(2-\epsilon,\infty)})\]
Define a map $K\colon \mathcal{N}_kF^{t}_{(0,g)}\times I\longrightarrow \mathcal{N}_kF^{t}_{(0,g)}$ by sending $(\underline{a};H)$ and $u\in[0,1]$ to $(\underline{a}-2;K_u(H))$ with 
\[K_u(H)=t_{2}^{\ast}(H|_{(-\infty,+\epsilon)}\cup \gamma(u)|_{(-\epsilon,\infty)})\]
where $\gamma$ is a path for the right inverse (cf. \ref{inverse}).
This defines a homotopy from $G^{\ast}\circ G_\ast$ to a map $K_1$ given by
\[K_1(H)=t_{2}^{\ast}(H|_{(-\infty,+\epsilon)}\cup g|_{(-\epsilon,\infty)})\]
This is because $\gamma(1)$ is orthogonal on $(-\epsilon,2+\epsilon)$, and it agrees with $G$ on $(-\epsilon,\epsilon)$ which is equal to $g|_{(-\epsilon,\epsilon)}$. Now $g$ is orthogonal on all $\mathbb{R}$, and therefore $\gamma(1)$ must agree with $g$ on $(-\epsilon,2+\epsilon)$.
 Finally define a homotopy $K'$ that sends $(\underline{a};H)$ and $u\in[0,1]$ to $(\underline{a}-2+2u,K_{u}'(H))$ with 
\[K_{u}'(H)=t_{2-2u}^{\ast}(H|_{(-\infty,+\epsilon)}\cup g|_{(-\epsilon,\infty)})\]
This shows that $G^{\ast}$ is a left homotopy inverse for $G_\ast$. Similarly a left inverse $L$ for $G$ defines a right homotopy inverse for $G_\ast$.
\end{proof}

\begin{prop}\label{secondkind}
Suppose that $\Psi$ is group like and damping at $M$, and that orthogonal elements of $\Psi(M)$ are almost open. Then all the maps of the second kind for $\res\colon\sing_{\sbt}\C\longrightarrow \sing_{\sbt}\partial \C$ are weak equivalences.
\end{prop}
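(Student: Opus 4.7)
The plan is to reduce every second-kind map to the special case already handled in Proposition~\ref{secondkindzero}, using functoriality of the slice construction.

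A direct check of the definition in Proposition~\ref{quillenB} shows that $(\alpha\beta)_* = \alpha_*\circ\beta_*$ for composable $\alpha\colon[p]\to[q]$ and $\beta\colon[m]\to[p]$, since each functor acts on the $\nu$-component by postcomposition of the indexing map. Given an arbitrary $\alpha\colon[p]\to[q]$ and $Y_q\in\sing_q\partial\C$, applying Proposition~\ref{secondkindzero} to the object $\alpha^*Y_q\in\sing_p\partial\C$ yields that $(\eta_0^{[p]})_*\colon(\alpha\eta_0^{[p]})^*Y_q/\res\to\alpha^*Y_q/\res$ is a weak equivalence on classifying spaces. The factorization $(\alpha\eta_0^{[p]})_* = \alpha_*\circ(\eta_0^{[p]})_*$ then reduces the claim for $\alpha$ to the claim for $\alpha\eta_0^{[p]}\colon[0]\to[q]$, which is simply the vertex map $\eta_i$ with $i=\alpha(0)$. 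So it suffices to show that every vertex map $\eta_i\colon[0]\to[q]$ induces a weak equivalence $(\eta_i)_*$.

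For a general vertex $\eta_i$, the argument of Proposition~\ref{secondkindzero} goes through essentially verbatim with $\eta_0\circ p$ replaced throughout by $\eta_i\circ p$. The damping hypothesis supplies the family $\{B^{\phi,\eta_i\circ p}\}_\phi$, which plays exactly the structural role of $\{B^{\phi,\eta_0\circ p}\}$ in defining the simplicial section $\xi$ and the simplicial homotopies $H$, $K$, $K'$ by identical formulas; axiom~(iv) of Definition~\ref{damping} guarantees simpliciality. The only point requiring adaptation is the pointwise-product interpolation $\phi\cdot b$ used in $H$ to connect $\phi$ with the constant $\eta_0\circ p$: for general $i$ it must be replaced by a monotone simplicial homotopy of maps $[n]\to[q]$ between $\phi$ and the constant $\eta_i\circ p$, for instance by a two-step concatenation passing through $\max(\phi,i)$.

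The main obstacle lies precisely in this monotone interpolation. The product $\phi\cdot b$ is monotone in the index for free because $0$ is the minimal element of $[q]$, and this convenience is lost when the target vertex is arbitrary. Passing through $\max(\phi,i)$ restores monotonicity at each of the two stages, and concatenating the two corresponding damping homotopies via axiom~(iv) produces the required simplicial homotopy. Once this combinatorial adjustment is in hand, the remainder of the argument, including the final inflation homotopy via the family $s_{\underline a}^u$ of diffeomorphisms, is identical to the proof of Proposition~\ref{secondkindzero}, completing the proof.
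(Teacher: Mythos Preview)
Your reduction to vertex maps $\eta_i\colon[0]\to[q]$ is correct and matches the paper's first step. The difference lies in how you handle a general vertex $\eta_i$: the paper does \emph{not} rerun the argument of Proposition~\ref{secondkindzero}. Instead it uses the damping element $B^{\eta\circ p,\eta_0\circ p}$ (which, since $\eta\circ p$ and $\eta_0\circ p$ are constant, is a single element of $\Psi(\partial M)^{\bot_{0,1}}$) to build a functor $B_\ast\colon \eta^\ast Y_q/\res\to\eta_0^\ast Y_q/\res$, shows $(\eta_0)_\ast\circ B_\ast\simeq\eta_\ast$, and then proves $B_\ast$ is an equivalence by producing homotopy inverses from right and left inverses of $B^{\eta\circ p,\eta_0\circ p}$ via the group-like hypothesis, exactly as in Proposition~\ref{firstkind}. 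Your route instead tries to bypass the group-like condition entirely by adapting the interpolation of Proposition~\ref{secondkindzero}.

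There is, however, a genuine gap in your interpolation. You claim that passing through $\max(\phi,i)$ ``restores monotonicity at each of the two stages''. The stage $\phi\leadsto\max(\phi,i)$ is fine: the map $j\mapsto\phi(j)$ if $b(j)=0$ and $\max(\phi(j),i)$ if $b(j)=1$ is monotone. But the second stage $\max(\phi,i)\leadsto i$ fails. The candidate $j\mapsto\max(\phi(j),i)$ if $b(j)=0$ and $i$ if $b(j)=1$ is \emph{not} monotone once some $\phi(j)>i$: for $j<j'$ with $b(j)=0$, $b(j')=1$ one would need $\max(\phi(j),i)\le i$. More conceptually, a simplicial homotopy between monotone maps $[n]\to[q]$ is a natural transformation in the poset $[q]$, so a two-step \emph{concatenation} $\phi\Rightarrow\max(\phi,i)\Rightarrow i$ would require $\max(\phi,i)\le i$, which is false. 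Thus no single simplicial homotopy, and no concatenation of two, connects $\phi$ to the constant $i$ through $\max(\phi,i)$.

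The fix is to replace the concatenation by a \emph{zigzag}: both $\phi\Rightarrow\max(\phi,i)$ and $i\Rightarrow\max(\phi,i)$ are natural transformations, and each yields a well-defined simplicial homotopy on $\mathcal{N}_k(Y_q/\res)_\bullet$ via $B^{\phi,\chi_{\phi,b}}$ and axiom~(iv). Both homotopies have the same endpoint at $b\equiv 1$, so after geometric realization they combine with the inflation homotopy $K$ to give $|(\eta_i)_\ast\circ\xi|\simeq|\mathrm{id}|$; the other composite is handled by $K'$ as before. With this correction your argument goes through and, interestingly, shows that the group-like hypothesis is not actually needed for this particular proposition --- something the paper's approach obscures, since it leans on Proposition~\ref{firstkind} and hence on inverses.
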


\begin{proof}
Notice that for a general map $\alpha\colon [p]\longrightarrow[q]$ there is a commutative diagram
\[\xymatrix{\alpha^{\ast}Y_q/\res\ar[r]^{\alpha_\ast}&Y_q/\res\\
\eta^{\ast}(\alpha^{\ast}Y_q)/\res\ar[u]^{\eta_\ast}\ar[ur]_{(\alpha\circ\eta)_\ast}&
}\]
for any choice of $\nu\colon[0]\longrightarrow [p]$, and $\alpha\circ\nu$ has also source $[0]$. Therefore it is enough to show that every map with source zero $\eta\colon [0]\longrightarrow [q]$ induces a weak equivalence $\eta_\ast$. Suppose that $Y_q=(0,f)$ for a map $f\colon \Delta^{n}\longrightarrow \Psi(\partial M)^{\bot_{\mathbb{R}}}$, and consider the diagram
\[\xymatrix{ \eta^{\ast}(0,f)/\res\ar[r]^{\eta_\ast }\ar[d]_{B^{\eta\circ p,\eta_0\circ p}_\ast}&(0,f)/\res\\
\eta_{0}^{\ast}(0,f)/\res\ar[ur]_{(\eta_0)_\ast }
}\]
where we recall that $p\colon [n]\longrightarrow [0]$ is the unique map, and $\{B^{\phi,\psi}\}$ is a family for $f$ given by the damping condition. The functor $B^{\eta\circ p,\eta_0\circ p}_\ast$ sends $(p,\underline{a}\leq 0;G)$ to $(p,\underline{a}\leq 0;B_\ast(G))$ with
\[B_\ast(G)=s_{\underline{a}}^{\ast}(G|_{(-\infty,\epsilon)}\cup B^{\eta\circ p,\eta_0\circ p}|_{(-\epsilon,\infty)})\]
where $s_{\underline{a}}\colon\mathbb{R}\longrightarrow\mathbb{R}$ is a diffeomorphism that sends $0$ to $1$ with the same properties as in the proof of \ref{secondkindzero}. The diagram does not commute strictly, but there is a homotopy between $(\eta_0)_\ast\circ B^{\eta\circ p,\eta_0\circ p}_\ast$ defined in a completely analogous way as the homotopy of the proof of \ref{secondkindzero}. The map $\eta_0$ is a homotopy equivalence by \ref{secondkindzero}, and $B^{\eta\circ p,\eta_0\circ p}_\ast$ is the scaling of a map of the first kind. Choosing right and left inverses for $B^{\eta\circ p,\eta_0\circ p}$ defines left and right homotopy inverses for $B^{\eta\circ p,\eta_0\circ p}_\ast$ analogously as in the proof of \ref{firstkind} above.
\end{proof}

Given $(b,g)\in \Ob\partial\C$, let $F_{(b,g)}^t$ be the topological poset with the same underlying set as $F_{(b,g)}$ but topologized using the standard topology of $\mathbb{R}$ on the real coordinate.

\begin{lemma}\label{topdiscronfib}
Suppose that orthogonal elements of $\Psi(M)$ are almost open.
The map of topological posets $F_{(b,g)}\longrightarrow F_{(b,g)}^t$ induced by the inclusion of $\mathbb{R}$ with the discrete topology into $\mathbb{R}$ with the standard topology is a weak equivalence on classifying spaces
\[BF_{(b,g)}\stackrel{\simeq}{\longrightarrow} BF_{(b,g)}^t\]
\end{lemma}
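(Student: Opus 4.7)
The plan is to factor the map $F_{(b,g)} \to F_{(b,g)}^t$ through a thickened topological poset built from the open cover $\{U_a\}_{a \in \mathbb{R}}$ of $\Psi(M)$ given by the almost-open hypothesis, and reduce the lemma to Segal-type comparisons with $\Psi(M;g)$ analogous to the proof of Proposition \ref{segaltrick}.

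I would first introduce two topological posets $\mathcal{E}_{(b,g)}$ and $\mathcal{E}_{(b,g)}^t$ with common underlying set
\[\coprod_{a\leq b}\colim_{0<\epsilon}\{G\in U_a \mid G|_{(b-\epsilon,\infty)}=g|_{(b-\epsilon,\infty)}\},\]
partial order $(a,G)\leq (a',H)$ iff $a\leq a'$ and $G=H$, and the discrete (resp.\ standard) topology on the real coordinate. Condition (1) of Definition \ref{almostopen} provides inclusions $F_{(b,g)}\hookrightarrow \mathcal{E}_{(b,g)}$ and $F_{(b,g)}^t\hookrightarrow \mathcal{E}_{(b,g)}^t$ fitting into the commutative square
\[\xymatrix{
BF_{(b,g)} \ar[r]\ar[d] & B\mathcal{E}_{(b,g)} \ar[d]\\
BF_{(b,g)}^t \ar[r] & B\mathcal{E}_{(b,g)}^t.
}\]
The plan is to show that the horizontal maps and the right-hand vertical are weak equivalences, which forces the left-hand vertical to be one as well.

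For the horizontal maps, at each simplicial level $k$ the nerve inclusion replaces $\bigcap_i\Psi(M)^{\bot_{a_i}}$ by $\bigcap_i U_{a_i}$ on each ordered tuple $(a_0\leq\dots\leq a_k)$, and this is a weak equivalence by condition (3) of Definition \ref{almostopen}. In the discrete-$\mathbb{R}$ case the nerve is a disjoint union indexed by tuples, so the levelwise equivalence immediately gives a realization equivalence, exactly as in the proof of \ref{segaltrick}. In the topological-$\mathbb{R}$ case one views $\mathcal{N}_k$ as fibered over the space of ordered tuples in $(-\infty,b]^{k+1}$ and applies a levelwise-equivalence criterion, using that the cover $\{U_a\}$ has local product structure around any tuple, so the fiberwise equivalence of condition (3) assembles into a weak equivalence of total spaces.

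For the right-hand vertical I would compare both thickenings to $\Psi(M;g)$ via the projection forgetting the real coordinate. In the discrete case the argument of Proposition \ref{segaltrick} goes through verbatim: the projection $\mathcal{N}_k\mathcal{E}_{(b,g)}\to\Psi(M;g)$ is \'etale, and the fiber over $G$ is the nerve of the discrete totally ordered set $\{a\leq b \mid G\in U_a\}$, which is nonempty (it contains a neighborhood of $b$, since $g$ is orthogonal on all of $\mathbb{R}$) and contractible, so \cite{soren}*{\S3.4} yields the equivalence. In the topological case the projection is no longer \'etale; I would instead exhibit it as a quasi-fibration, using openness of the $U_a$ to construct local trivializations, with fiber over $G$ the classifying space of the topological totally ordered poset $\{a\leq b \mid G \in U_a\}\subset (-\infty,b]$, which is contractible. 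This last verification is the step I expect to be the main obstacle: replacing the \'etale recognition of \cite{soren}*{\S3.4} by a quasi-fibration recognition requires careful bookkeeping of how the cover $\{U_a\}$ varies with $a$, and one may need either to refine the cover using paracompactness of $\Psi(M;g)$, or to invoke a Dold-type theorem to glue the local equivalences into a global one.
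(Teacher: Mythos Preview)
Your overall architecture matches the paper's: factor through a thickened poset $\mathcal{E}$ built from the open cover $\{U_a\}$ and compare with $\Psi(M;g)$ via the projection forgetting the real coordinate. The paper streamlines your square to a triangle
\[\xymatrix{BF_{(b,g)}\ar[dr]_{\simeq}\ar[r]& BF_{(b,g)}^t\ar[d]\\ &\Psi(M;g)}\]
since the diagonal is already known to be an equivalence by Proposition~\ref{htpyfiber}; so only the right-hand projection (factored through the topological $\mathcal{E}_g$) needs work. This saves you the discrete $\mathcal{E}_{(b,g)}$ and the right vertical in your square.

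The substantive difference is precisely at the step you flagged as the main obstacle. Rather than trying to exhibit $B\mathcal{E}_g^t\to\Psi(M;g)$ as a quasi-fibration with contractible fibers, the paper works directly on homotopy groups. Given $f\colon S^n\to\Psi(M;g)$, compactness of $S^n$ lets one extract a \emph{finite} subcover $U_{a_0},\dots,U_{a_k}$ and a subordinate partition of unity $\phi_0,\dots,\phi_k$; then $x\mapsto [(a_0\leq\dots\leq a_k;f(x)),(\phi_0(x),\dots,\phi_k(x))]$ defines a section $s$ on $\pi_n$. For the other composite, an explicit straight-line homotopy in the simplex coordinate (after merging the two ordered tuples into a common refinement) shows $s\circ\pr_*=\id$ on $\pi_n$. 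This is elementary and needs nothing beyond openness of each $U_a$ and the covering property---in particular no control over how the set $\{a\leq b: G\in U_a\}$ varies with $a$, no paracompactness of $\Psi(M;g)$, and no Dold-type gluing. Your quasi-fibration route would have to confront exactly those issues; the partition-of-unity trick sidesteps them.
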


\begin{proof}
Again assume for simplicity that $b=0$.
The projection off the real coordinate induces a commutative diagram
\[\xymatrix{BF_{(0,g)}\ar[dr]_{\simeq}\ar[r]& BF_{(0,g)}^t\ar[d]\\
&\Psi(M;g)
}\]
and the diagonal map is an equivalence by \ref{htpyfiber}. Therefore it is enough to show that the projection 
\[BF_{(0,g)}^t\longrightarrow\Psi(M;g)\]
is a weak equivalence.
Let $\mathcal{E}_g$ be the topological poset
\[\mathcal{E}_g=\coprod_{a\leq 0}U_{a}^{g}\]
where
\[U_{a}^{g}:=U_a\cap\Psi(M;g)=\colim_{\epsilon>0}\{G\in U_a:\ G|_{(-\epsilon,\infty)}=g|_{(-\epsilon,\infty)}\}\]
Here the real coordinate $a\leq 0$ has the standard topology, and $U_a$ is the open subset of $\Psi(M)$ weakly equivalent to $\Psi(M)^{\bot_a}$ given by the almost open condition (see \ref{almostopen}). Notice that since $U_a$ is open in $\Psi(M)$ the colimit topology on $U_{a}^{g}$ is the same as the topology as a subspace of $\Psi(M;g)$. The poset structure is defined as usual by $(a,G)\leq (a',H)$ if $a\leq a'$ and $G=H$.
Since the maps $\Psi(M)^{\bot_a}\longrightarrow U_a$ are weak equivalences, the map above factors as
\[BF_{(0,g)}^t\stackrel{\simeq}{\longrightarrow} B\mathcal{E}_g\stackrel{\pr}{\longrightarrow}\Psi(M;g)\]
We define an inverse in homotopy groups for $\pr\colon B\mathcal{E}_g\longrightarrow\Psi(M;g)$. Let $f\colon S^{n}\longrightarrow \Psi(M;g)$ be a continuous map representing an element of $\pi_n\Psi(M;g)$. Since the family $\{U_{a}\}$ covers $\Psi(M)$, for every $x\in S^n$ there is a $a_x\leq 0$ with $f(x)\in U_{a_x}^{g}$. Since every $U_{a_x}^{g}$ is open in $\Psi(M;g)$, the family $V_{a_x}:=f^{-1}(U_{a_x})$ is an open cover of $S^n$. Pick a finite sequence $a_0<\dots< a_k$ such that $V_{a_0},\dots,V_{a_k}$ covers the sphere, and choose a partition of unity $\phi_{0},\dots,\phi_k$ subordinate to the covering $\{V_{a_i}\}$. Such a choice of coordinates for every representative $f\colon S^{n}\longrightarrow \Psi(M;g)$ of homotopy class defines a map
\[s\colon\pi_n \Psi(M;g)\longrightarrow\pi_nB\mathcal{E}_g\] given by
\[s(f)(x)=[(a_0\leq\dots\leq a_k;f(x)),(\phi_{0}(x),\dots,\phi_k(x))]\]
where $(\phi_{0}(x),\dots,\phi_k(x))\in\Delta^k$. The map $s$ is a section for the projection map $\pi_nB\mathcal{E}_g\longrightarrow \pi_n \Psi(M;g)$.

Given a map $h\colon S^{n}\longrightarrow B\mathcal{E}_g$ We define a homotopy between $s(\pr\circ h)$ and $h$. At every point $x\in S^n$ the element $h(x)$ is an equivalence class of the form
\[h(x)=[(b_0(x)\leq\dots\leq b_{m_x}(x)\leq 0;G_x),(t_0(x),\dots,t_{m_x}(x))]\]
for a $G_x\in U_{b_0(x)}^g\cap\dots\cap U_{b_{m_x}(x)}^g\subset\Psi(M;g)$ and a $\underline{t}(x)=(t_0(x),\dots,t_{m_x}(x))\in\Delta^{m_x}$. Notice that we can assume $b_i(x)\neq b_j(x)$ for $i\neq j$. Let $\underline{a}=(a_1<\dots< a_k)$ be the sequence chosen for the map $\pr\circ h$ for the definition of $s(\pr\circ h)$, and $\underline{\phi}(x)=(\phi_1(x),\dots,\phi_k(x))$ the associated partition of unity. Denote $\underline{c}(x)$ the ordered union of the sequences $\underline{b}(x)=(b_0(x)\leq\dots\leq b_{m_x}(x))$ and $\underline{a}$. Let $\omega_x\colon [k]\longrightarrow[m_x+k+1]$ be the unique map such that $\omega_{x}^{\ast}\underline{c}(x)=\underline{a}$, and $\rho_x\colon [m_x]\longrightarrow[m_x+k+1]$ the one satisfying $\rho_{x}^{\ast}\underline{c}(x)=\underline{b}(x)$. Define a map $H\colon S^n\times[0,1]\longrightarrow B\mathcal{E}_g$ by
\[H(x,u)=[(\underline{c}(x);G_x),u\cdot(\omega_x)_\ast\underline{\phi}(x)+(1-u)\cdot(\rho_x)_\ast\underline{t}(x)]\]
This is a homotopy from $h$ to $s(\pr\circ h)$.
\end{proof}

\begin{rem}
This last lemma essentially shows that we can endow all our constructions with either the standard or the discrete topology on the real coordinate without changing the weak homotopy type. However, it is easier to deal with the hypothesis of Quillen's theorem B if one uses the discrete topology. 
\end{rem}


\section{The Madsen-Weiss theorem as a relative h-principle}\label{4aut}\label{sec6}

Let $\Psi_k\colon\emb_d\longrightarrow \Top$ be the sheaf of \cite{osmonoids}. It associates to a manifold $M$ the space $\Psi_k(M)$ of $k$-dimensional submanifolds of $M$ that are closed as a subset.
This space is topologized by defining a neighborhood $\mathcal{V}_{K,W}(G)$ of $G\in\Psi_k(M)$ for each pair $(K,W)$, where $K\subset M$ is compact and $W\subset\emb(N,M)$ is a neighborhood of the inclusion $N\subset M$ for the $C^\infty$-topology. The neighborhood is defined by
\[\mathcal{V}_{K,W}(N)=\{P\in\Psi_k(M)|P\cap K=j(N)\cap K\mbox{ for some }j\in W\}\]
The functor $\Psi_k$ sends an embedding $e\colon M\longrightarrow M'$ to
\[e^{\ast}(G):=e^{-1}(G)\]
The sheaf gluing property is given by union of subsets.

The cobordism category of $k$-dimensional bordisms in $\mathbb{R}^d$ of \cite{GMTW} is the topological category $\mathcal{C}_{k,d}$ with objects space
\[\coprod_{a\in\mathbb{R}}\{g\in\Psi_k(\mathbb{R}^{d-1})| g \mbox{ is compact}\}\]
and morphisms space
\[\coprod_{a\leq b\in\mathbb{R}}\colim_{\epsilon>0}\{G\in \Psi_k(\mathbb{R}^{d-1}\times[a,b])| G\bot_{[a,a+\epsilon)}, G\bot_{(b-\epsilon,b]}, G \mbox{ is compact}\}\]
where we denoted $G\bot_{J}$ if $G\cap\mathbb{R}^{d-1}\times J=N\times J$ for some $(k-1)$-submanifold $N\subset \mathbb{R}^{d-1}$. Source and target take intersection with $\mathbb{R}^{d-1}\times a$ and $\mathbb{R}^{d-1}\times b$ respectively, and composition of morphisms is defined by union. Notice that if we drop the compactness conditions this category would be isomorphic to $\partial\C(\mathbb{R}^{d-1}\times(-\infty,0])$ of §\ref{sec2}.

In this section we use theorem \ref{main} give a formal proof of the following theorem. Let $\mathcal{G}_{k,d}$ be the Grassmanians of $k$-planes in $\mathbb{R}^{d}$, and 
\[\gamma_{k,d}^\bot=\{(V,v)\in\mathcal{G}_{k,d}\times\mathbb{R}^{d}|v\bot V\}\]
 the complement of the tautological bundle over $\mathcal{G}_{k,d}$.
\begin{theorem}[\cite{MadsenWeiss},\cite{GMTW}]
There is a weak equivalence
\[B\mathcal{C}_{k,d}\simeq\Omega^{d-1}Th(\gamma_{k,d}^\bot)\]
where $Th(\gamma_{k,d}^\bot)$ denotes the Thom space.
\end{theorem}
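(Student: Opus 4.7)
The plan is to apply Theorem \ref{main} to the sheaf $\Psi_k$ on $M=D^{d-1}\times\mathbb{R}$ with boundary condition $g_0=\emptyset\in\Psi_k(\partial M)^{\bot_{\mathbb{R}}}$. The conclusion will be a weak equivalence
\[\Psi_k(D^{d-1}\times\mathbb{R};\emptyset)\;\simeq\;\Psi_k^{\ast}(D^{d-1}\times\mathbb{R};h(\emptyset)),\]
and it will then suffice to identify the left-hand side with $B\mathcal{C}_{k,d}$ and the right-hand side with $\Omega^{d-1}Th(\gamma_{k,d}^{\bot})$.

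I first verify the four hypotheses of Theorem \ref{main} for $\Psi_k$. The h-principle hypothesis follows from Gromov's theorem for microflexible sheaves applied to the open manifolds $M_{<\infty}\cong\mathbb{R}^d$ and $\partial M\times\mathbb{R}\cong S^{d-2}\times\mathbb{R}^2$. For almost openness, I take $U_a\subset\Psi_k(M)$ to be the subspace of submanifolds transverse to the slice $\partial M\times a$ in the collar (slightly modified for the non-compactness of $M$, as hinted at in \ref{transalmostopen}); transversality is $C^{\infty}$-open and the inclusion of the orthogonal locus into the transverse locus is a weak equivalence by a canonical smoothing isotopy. Group-likeness is witnessed by the reflections $R=(2-\id)^{\ast}G$ and $L=(-\id)^{\ast}G$ of a given $G\in\Psi_k(\partial M)^{\bot_{0,1}}$, with the required concordance paths $\gamma,\delta$ produced by sliding the critical values of $G$ through the reflected copy. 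The damping condition is the most delicate: for $f\colon\Delta^q\to\Psi_k(\partial M)^{\bot_\mathbb{R}}$ I would build the required coherent family $\{B^{\phi,\psi}\}$ by interpolating the germ of $\phi^{\ast}f$ at $0$ and the germ of $\psi^{\ast}f$ at $1$ through a smoothly-parametrized family of cobordisms, invoking the smooth approximation theorem for submanifolds from \cite{osmonoids}.

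With the hypotheses established, I identify each side. Since $D^{d-1}\times\mathbb{R}$ is parallelizable, the $\Psi_k^{\ast}$-bundle is trivial with fiber $\Psi_k(\mathbb{R}^d)$, and the boundary condition selects the subspace of maps $(D^{d-1}\times\mathbb{R},S^{d-2}\times\mathbb{R})\to(\Psi_k(\mathbb{R}^d),\emptyset)$ which equal the empty submanifold near the boundary cylinder. Using that $D^{d-1}\times\mathbb{R}/S^{d-2}\times\mathbb{R}\simeq S^{d-1}$ and that the surviving $\mathbb{R}$-factor is contractible, the right-hand side is $\Omega^{d-1}\Psi_k(\mathbb{R}^d)$, and the scanning retraction sending a non-empty $W$ to the pair $(T_pW,p)$ (where $p\in W$ is the point closest to the origin) gives the classical identification $\Psi_k(\mathbb{R}^d)\simeq Th(\gamma_{k,d}^{\bot})$ of \cite{osmonoids}. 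On the other side, $\Psi_k(D^{d-1}\times\mathbb{R};\emptyset)$ is exactly the space of $k$-submanifolds of $D^{d-1}\times\mathbb{R}$ which are compactly supported in the disk direction, and slicing at regular values of the projection to $\mathbb{R}$ presents such a submanifold as a bi-infinite sequence of $(k-1)$-submanifolds of $D^{d-1}$ linked by compact cobordisms, realizing this space as $B\mathcal{C}_{k,d}$ via the Galatius--Madsen--Tillmann--Weiss sheaf model. The main obstacle is the damping condition: it is the precise hypothesis designed to make the Quillen-B machinery of \ref{secondkindzero} go through, and its verification for $\Psi_k$ is where the smooth structure of submanifolds enters in an essential way. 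A secondary subtlety is checking that the GMTW identification of the left-hand side is compatible with the categorical model of \ref{sec2} through which Theorem \ref{main} actually produces the equivalence.
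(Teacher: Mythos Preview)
Your proposal is correct and follows essentially the same route as the paper: apply Theorem~\ref{main} to $\Psi_k$ on $M=D^{d-1}\times\mathbb{R}$ with boundary condition $\emptyset$, verify the four hypotheses via microflexibility, transversality, reflection, and smooth approximation (exactly as the paper does in Propositions~\ref{cond1}--\ref{mangplike}), and then identify the two sides with $B\mathcal{C}_{k,d}$ and $\Omega^{d-1}Th(\gamma_{k,d}^{\bot})$ respectively. The only point where your sketch diverges slightly from the paper is the group-like path: rather than ``sliding critical values,'' the paper exploits the product decomposition $\partial M=S^{d-2}\times\mathbb{R}$ to translate $G\cup\overline{G}$ off to infinity along the extra $\mathbb{R}$-factor until it becomes empty, then runs the same translation backwards on the orthogonal extension---this is where the hypothesis $\partial M=N\times\mathbb{R}$ of Proposition~\ref{mangplike} is actually used.
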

 
\begin{proof}
Following \cite{david}, we prove the theorem using the relative h-principle map $\Psi_k(M;g_0)\longrightarrow\Psi_{k}^\ast(M;h(g_{0}))$ for $M=D^{d-1}\times\mathbb{R}$ and boundary condition $g_0=\emptyset\in\Psi_k(D^{d-1}\times\mathbb{R})$. By expanding the interior of $D^{d-1}$ to $\mathbb{R}^{d-1}$, we see that $\Psi_k(D^{d-1}\times\mathbb{R};\emptyset)$ is homeomorphic to the space of $k$-submanifolds of $\mathbb{R}^d$ which are bounded in the first $d-1$ components. This is the space $\mathcal{D}_{k,(1,d)}$ of \cite{david}, which is weakly equivalent to the classifying space of $\mathcal{C}_{k,d}$ via a zig zag
\[B\mathcal{C}_{k,d}\stackrel{\simeq}{\longleftarrow}BD\stackrel{\simeq}{\longrightarrow}\mathcal{D}_{k,(1,d)}\cong \Psi_k(D^{d-1}\times\mathbb{R};\empty)\]
through a topological poset $D$. This construction is analogous to our proof of $\ref{cd}$ an we refer to \cite{david} for the details.

Recall that $\Psi_{k}^\ast(D^{d-1}\times\mathbb{R})$ is defined in terms of sections of a certain bundle over $D^{d-1}\times\mathbb{R}$. Since $D^{d-1}\times\mathbb{R}$ is canonically contractible this bundle is trivial, and the space $\Psi_{k}^\ast(D^{d-1}\times\mathbb{R})$ is canonically homeomorphic to the space of maps $D^{d-1}\times\mathbb{R}\longrightarrow\Psi_k(\mathbb{R}^{d})$. Under this identification the boundary condition $h(\emptyset)$ is a constant map, and contracting the $\mathbb{R}$ component gives a homotopy equivalence \[\Psi_{k}^\ast(D^{d-1}\times\mathbb{R};h(\emptyset))\simeq\Omega^{d-1}\Psi_{k}(\mathbb{R}^d)\]
The map $Th(\gamma_{k,d}^\bot)\longrightarrow\Psi_{k}(\mathbb{R}^d)$ that sends a pair $(V,v)$ to $V+v\subset\mathbb{R}^d$, and the basepoint $\infty$ to $\emptyset$ is a weak equivalence (see \cite{david}). All together we get a diagram
\[B\mathcal{C}_{k,d}\simeq\Psi_k(D^{d-1}\times\mathbb{R};\emptyset)\stackrel{h}{\longrightarrow}\Psi_{k}^\ast(D^{d-1}\times\mathbb{R};h(\emptyset))\simeq\Omega^{d-1}Th(\gamma_{k,d}^\bot)\]
We prove in \ref{cond1},\ref{transalmostopen}, \ref{psikdamping} and \ref{mangplike} below that the sheaf $\Psi_k$ satisfies the conditions of \ref{main} for $M=D^{d-1}\times\mathbb{R}$, and therefore the relative h-principle map
\[\Psi_k(D^{d-1}\times\mathbb{R};\emptyset)\stackrel{h}{\longrightarrow}\Psi_{k}^\ast(D^{d-1}\times\mathbb{R};h(\emptyset))\]
is a weak equivalence.
\end{proof}

\begin{rem}
Our main theorem gives a relative h-principle for any boundary condition $g\in\Psi_k(\partial (D^{d-1}\times\mathbb{R}))=\Psi_k((S^{d-2}\times\mathbb{R})\times\mathbb{R})^{\bot_{\mathbb{R}}}$ that is not necessarily the empty manifold. One can define a category $\mathcal{C}_{k,d}^g$ whose morphisms are bordisms in $D^{d-1}\times[a,b]$ that agree with $g$ in a neighborhood of $(\partial D^{d-1})\times[a,b]$. Our proof gives an equivalence
\[B\mathcal{C}_{k,d}^g\simeq\map_{h(g)}(D^{d-1}\times\mathbb{R},\Psi_k(\mathbb{R}^d))\]
where $\map_{h(g)}(D^{d-1}\times\mathbb{R},\Psi_k(\mathbb{R}^d))$ is the space of maps $D^{d-1}\times\mathbb{R}\longrightarrow \Psi_k(\mathbb{R}^d)$ that agree with $h(g)\colon (S^{d-2}\times\mathbb{R})\times\mathbb{R}\longrightarrow \Psi_k(\mathbb{R}^d)$ near the boundary. Here $h(g)$ is the scanning of the boundary condition $g$, and it is in general not a constant map.
\end{rem}

\begin{rem}
If $\theta\colon B\longrightarrow BGL_d$ is a fibration, a tangential structure on a $d$-manifold $W\subset\mathbb{R}^k$ is a lift along $\theta$ for the classifying map $W\longrightarrow \mathcal{G}_{k,d}$ of the tangent bundle. There are analogues  $\Psi_{k}^\theta$ and $\mathcal{C}_{k,d}^{\theta}$ where the manifolds are equipped with a tangential structure, and a Madsen-Weiss theorem
\[B\mathcal{C}_{k,d}^{\theta}\simeq \Omega^{d-1}Th(\theta^\ast\gamma_{k,d}^\bot)\]
(see \cite{GMTW}, \cite{soren}, \cite{david}, \cite{osmonoids}). All the steps of our proof go through in this more general situation, except the group-like condition that might fail for a general $\theta$. It does however hold in the case of oriented manifolds, when $\theta\colon \mathcal{G}_{k,d}^+\longrightarrow\mathcal{G}_{k,d}$ is the projection map from the oriented Grassmanians.
\end{rem}

Condition (1) of \ref{main} holds for $\Psi_k$ for deep results of \cite{oscar} and \cite{gromov}.
\begin{prop}[\cite{oscar},\cite{gromov}]\label{cond1}
If $M$ is an open manifold, the h-principle maps $\Psi_k(M)\longrightarrow\Psi_{k}^\ast(M)$ and $\Psi_{k}(\partial M)\longrightarrow\Psi_{k}^\ast(\partial M)$ are weak equivalences.
\end{prop}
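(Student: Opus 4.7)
The plan is to reduce the statement directly to the Gromov microflexibility h-principle, as already signposted by the remark following Theorem \ref{main}. Both $M$ (by hypothesis) and $\partial M \times \mathbb{R}$ (which is the manifold that by convention defines $\Psi_k(\partial M)$) are open $d$-, respectively $(d-1)$-manifolds (after crossing with $\mathbb{R}$). Hence the remark reduces the proposition to verifying that $\Psi_k$ is microflexible on any open manifold $N$, i.e.\ that for each pair of compact subsets $K \subset K' \subset N$ the restriction map $\Psi_k(K') \to \Psi_k(K)$ is a microfibration of quasi-topological spaces in the sense recalled from \cite{oscar}.

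To check microflexibility, I would verify the lifting property for discs: given a map $f\colon D^n \to \Psi_k(K')$ and a homotopy $h\colon D^n \times [0,1] \to \Psi_k(K)$ with $h_0 = f|_K$, I produce an $\varepsilon>0$ and a lift $\widetilde h\colon D^n \times [0,\varepsilon] \to \Psi_k(K')$ with $\widetilde h_0 = f$ and $\widetilde h|_K = h|_{D^n \times [0,\varepsilon]}$. The construction is local in the parameter $x \in D^n$ and uses the standard normal-graph description of the topology on $\Psi_k$ from \cite{osmonoids}: for every $x$ there is a neighborhood $V_x \subset D^n$ and a $\delta_x>0$ on which the homotopy $h|_{V_x \times [0,\delta_x]}$ is realized by smoothly varying sections $s_{x',t}$ of the normal bundle $\nu_{W_{x'}}$ of $W_{x'} = f(x')$, defined in a compact collar of $W_{x'} \cap K$ inside $W_{x'}$. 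By a partition-of-unity cutoff supported away from the frontier of $K$ inside $K'$, extend these sections by zero to the whole of $W_{x'}$, obtaining sections $\widetilde s_{x',t}$ whose norm is controlled by $t$. The exponential map of an auxiliary Riemannian metric then defines, for sufficiently small $t$, a family of submanifolds $\widetilde h(x',t)\subset K'$ that still lies in $\Psi_k(K')$ and agrees with $h(x',t)$ on $K$.

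A standard compactness argument on $D^n$ produces a uniform $\varepsilon>0$ on which all these locally defined lifts are simultaneously valid, and a straightforward gluing using a finite subcover and a partition of unity (now over $D^n$) assembles them into a single continuous lift $\widetilde h$. This gives the microfibration property, and with it the microflexibility of $\Psi_k$.

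The main obstacle is purely technical rather than conceptual: one has to know that the continuous path $t \mapsto h(x,t)$ in $\Psi_k(K)$ is, on a short time interval depending on $x$, realized by normal graphs over $W_x \cap K$. This is a consequence of the very definition of the neighborhoods $\mathcal{V}_{K,W}(N)$ from \cite{osmonoids} combined with the tubular neighborhood theorem, and it is where the proof really uses the smooth structure. Once this local representability is granted, the cutoff-and-exponentiate argument above is formal, and Gromov's theorem \cite{gromov} completes the proof for both $\Psi_k(M)\to\Psi_k^\ast(M)$ and $\Psi_k(\partial M)\to\Psi_k^\ast(\partial M)$.
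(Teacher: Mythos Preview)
Your approach is essentially the same as the paper's: both reduce the statement to Gromov's microflexibility h-principle applied to the open manifolds $M_{<\infty}$ and $\partial M\times\mathbb{R}$. The only difference is that the paper simply cites \cite{oscar} for the microflexibility of $\Psi_k$, whereas you sketch that argument yourself; your normal-graph/cutoff/exponential sketch is the right idea and is exactly what \cite{oscar} makes precise, so there is no substantive divergence.
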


\begin{proof}
In \cite{oscar} the author proves that the sheaf 
\[\mathcal{O}^{op}(W)\longrightarrow\Top\]
defined on the category of open subsets of any $d$-manifold $W$ by restricting $\Psi_k$ is microflexible. By Gromov's theorem \cite{gromov} the h-principle maps $\Psi_k(M)\longrightarrow\Psi_{k}^\ast(M)$ and $\Psi_{k}(\partial M)\longrightarrow\Psi_{k}^\ast(\partial M)$ are weak equivalences for $M$ open.
\end{proof}

Notice that for $M=D^{d-1}\times\mathbb{R}$ the space $\Psi^{\ast}(D^{d-1}\times\mathbb{R})$ is canonically equivalent to $\Psi(\mathbb{R}^d)$ since $D^{d-1}\times\mathbb{R}$ is contractible, and the h-principle map $\Psi(D^{d-1}\times\mathbb{R})\longrightarrow \Psi^{\ast}(D^{d-1}\times\mathbb{R})$ is always an equivalence (this is however not true in general for $\partial(D^{d-1}\times\mathbb{R})$).

The main ingredient of the following is a result of \cite{GMTW}, where the authors show that the inclusion of orthogonal elements into transverse elements is a weak equivalence.
\begin{prop}[\cite{GMTW}]\label{transalmostopen}
The orthogonal elements of the sheaf $\Psi_k$ are almost open at $M$, if $\partial M=N\times\mathbb{R}^n$ for some compact manifold $N$ and $0\leq n\leq d$. 
\end{prop}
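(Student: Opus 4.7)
The plan is to define the cover $\{U_a\}_{a \in \mathbb{R}}$ by declaring $U_a \subset \Psi_k(M)$ to be the set of $k$-submanifolds that intersect $\partial M \times \{a\}$ transversally, and then to verify the three conditions of Definition~\ref{almostopen}. Condition (1) is immediate, since a submanifold orthogonal to $\partial M$ on $(a-\epsilon, a+\epsilon)$ is locally a product along the collar direction and hence is transverse to every slice $\partial M \times \{a\}$. Condition (2) says that the image $U_a|_{\partial M}$ is open in $\Psi_k(\partial M) = \Psi_k(\partial M \times \mathbb{R})$; this image is precisely the subspace of submanifolds of $\partial M \times \mathbb{R}$ that are transverse to $\partial M \times \{a\}$, and the same openness argument used for $U_a$ applies there.

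For the openness of $U_a$ in $\Psi_k(M)$, the standard argument works immediately when $M$ is compact, since transversality is a $C^\infty$-open condition. In the non-compact case one must be careful: a transverse submanifold $W$ may meet $\partial M \times \{a\}$ in an unbounded set, and an arbitrary $C^\infty$-small perturbation need not remain transverse at points far away. This is exactly where the hypothesis $\partial M = N \times \mathbb{R}^n$ with $N$ compact enters. The plan is to use the product structure and translation-invariance in the $\mathbb{R}^n$-factor together with compactness of $N$ to obtain a uniform lower bound on the transversality angle along $W \cap (\partial M \times \{a\})$. This lets us exhibit a basic neighborhood $\mathcal{V}_{K, \mathcal{W}}(W)$, with $K$ a thin slab around $\partial M \times \{a\}$ in $M_{<\infty}$ and $\mathcal{W}$ a uniformly small $C^\infty$-neighborhood of the inclusion of $W$, whose elements are still transverse.

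The homotopical content of condition (3) is the quoted \cite{GMTW} result: given a submanifold transverse to the finitely many slices $\partial M \times \{a_i\}$, one produces a canonical deformation that straightens it to be orthogonal in a small collar neighborhood of each $a_i$. Concretely, one uses a vector field in the collar direction, supported near the slices, to flow $W$ until it becomes a product on a neighborhood of each $\partial M \times \{a_i\}$; the intermediate stages remain transverse to all $a_i$ simultaneously, so the homotopy lives in $\cap_i U_{a_i}$, proving that $\cap_i \Psi_k(M)^{\bot a_i} \hookrightarrow \cap_i U_{a_i}$ is a weak equivalence. Restricting the same construction to $\partial M \times \mathbb{R}$ gives the second weak equivalence in condition (3).

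The main obstacle is the openness argument in the non-compact case, which is precisely what forces the hypothesis on $\partial M$; once this is in hand, condition (1) is trivial and condition (3) is provided by the \cite{GMTW} straightening construction applied slice-by-slice.
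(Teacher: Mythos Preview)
Your plan has a genuine gap in the openness argument, and it is precisely the point where the paper does something different. You take $U_a$ to be the subspace of $G\in\Psi_k(M)$ that are transverse to the \emph{entire} slice $\partial M\times\{a\}$. This set is \emph{not} open when $\partial M$ is non-compact, and the hypothesis $\partial M=N\times\mathbb{R}^n$ does not rescue it. The reason is that the basic neighborhoods $\mathcal{V}_{K,\mathcal W}(W)$ in the topology of $\Psi_k(M)$ are indexed by \emph{compact} subsets $K\subset M_{<\infty}$; they constrain $P$ only on $K$ and say nothing about $P$ outside $K$. Since $\partial M\times\{a\}=N\times\mathbb{R}^n\times\{a\}$ is non-compact for $n\geq 1$, no compact $K$ contains it, and every neighborhood $\mathcal{V}_{K,\mathcal W}(W)$ contains submanifolds that coincide with $W$ on $K$ but fail transversality at points of $\partial M\times\{a\}$ lying outside $K$. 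Your ``uniform transversality angle'' is therefore irrelevant: even if $W$ itself meets the slice at a uniformly positive angle, the topology gives you no control on nearby submanifolds at infinity. The paper states this explicitly: ``Unfortunately, the space $\Psi_k(M)^{\pitchfork a}$ is not open in $\Psi_k(M)$ unless $M$ is compact.''

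The paper's remedy is to weaken the definition of $U_a$: it only asks that $G$ be transverse to $\partial M\times\{a\}$ on $N\times B^\delta$ for some $\delta>0$, where $B^\delta\subset\mathbb{R}^n$ is the open ball of radius $1+\delta$. Since $N\times\overline{B^1}$ is compact, openness of this $U_a$ follows from the usual argument. One then needs an extra step you do not have: showing that the inclusion $\Psi_k(M)^{\pitchfork a}\hookrightarrow U_a$ is a weak equivalence. This is where the product structure $\partial M=N\times\mathbb{R}^n$ is actually used: given $f\colon S^m\to U_a$, choose a common $\delta$ by compactness of $S^m$, pick a diffeomorphism $\phi\colon\mathbb{R}^n\to B^\delta$, and pull back by $\id_N\times\phi$ to land in $\Psi_k(M)^{\pitchfork a}$. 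Combined with the \cite{GMTW} straightening $\Psi_k(M)^{\bot_a}\simeq\Psi_k(M)^{\pitchfork a}$ (which you do invoke), this gives condition (3). Your outline for condition (3) is fine in spirit, but it is attached to the wrong $U_a$.
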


\begin{proof}
Given a real number $a$, define $\Psi_k(M)^{\pitchfork a}\subset \Psi_k(M)$ to be the subspace of manifolds that intersect $\partial M\times a$ transversally. Clearly every submanifold that intersects $\partial M\times a$ orthogonally belongs to $\Psi_k(M)^{\pitchfork a}$, and this defines a map
\[\Psi_k(M)^{\bot_a}\longrightarrow \Psi_k(M)^{\pitchfork a}\]
This is showed to be a weak equivalence in \cite{david} and \cite{osmonoids}. Unfortunately, the space $\Psi_k(M)^{\pitchfork a}$ is not open in $\Psi_k(M)$ unless $M$ is compact. If $\partial M=N\times\mathbb{R}^n$, let $B^{\delta}\subset\mathbb{R}^n$ be the open ball of radius $1+\delta$ centered at the origin. Define $U_a$ as the subspace of $\Psi_k(M)$ of submanifolds that are transverse to $\partial M\times a$ in a neighborhood of the unit compact ball
\[U_a:=\{G\in \Psi_k(M)| (G\cap N\times B^{\delta})\pitchfork \partial M\times a \mbox{ for some }\delta>0\}\]
Clearly $\Psi_k(M)^{\pitchfork a}\subset U_a$. The subspace $U_a$ is open, since given a $G\in U_a$ one can find a neighborhood of the inclusion $W\subset\emb(G,M)$ small enough so that $\mathcal{V}_{K,N\times K}\subset\Psi_k(M)^{\pitchfork a}$ for every compact subset $K\subset\mathbb{R}^n$ containing the compact unit ball.

It remains to show that the inclusion $\Psi_k(M)^{\pitchfork a}\longrightarrow U_a$ is a weak equivalence. Given a map $f\colon S^{n}\longrightarrow U_a$ one can choose $\delta>0$ such that $f(x)\cap N\times B^{\delta}$ is transverse to $\partial M\times a$ for all $x$. Pick a diffeomorphism $\phi\colon \mathbb{R}^n\longrightarrow B^{\delta}$, and define a map $r(f)\colon S^n\longrightarrow \Psi_k(M)^{\pitchfork a}$ by
\[r(f)(x)=(\id_N\times\phi)^{\ast} f(x)\]
This construction defines an inverse for the inclusion map on the $n$-th homotopy group.
\end{proof}

The main ingredient for the next result is the smooth approximation theorem for $\Psi_k$ of \cite{osmonoids}.

\begin{prop}\label{psikdamping}
The sheaf $\Psi_k$ is damping at any manifold $M$.
\end{prop}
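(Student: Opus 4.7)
The plan is to realize $B^{\phi,\psi}(x)$ as the spatial trace in $\partial M\times\mathbb{R}$ of the interpolating homotopy $H^{\phi,\psi}$ of the preceding remark, with the $[0,1]$-parameter reinterpreted as the collar coordinate. Since $f$ lands in $\Psi_k(\partial M)^{\bot_{\mathbb{R}}}$, each value $f(y)$ has the form $N_y\times\mathbb{R}$ for a unique $(k-1)$-submanifold $N_y\subset\partial M$, so $f$ corresponds to a continuous map $\tilde f\colon\Delta^q\to\Psi_{k-1}(\partial M)$. Fix a smooth non-decreasing step function $\lambda\colon\mathbb{R}\to[0,1]$ with $\lambda\equiv 0$ on $(-\infty,\epsilon)$ and $\lambda\equiv 1$ on $(1-\epsilon,\infty)$, and define a universal interpolation
\[\mathcal{F}\colon\Delta^q\times\Delta^q\times\mathbb{R}\longrightarrow\Psi_{k-1}(\partial M),\qquad \mathcal{F}(y,z,s):=\tilde f\bigl((1-\lambda(s))y+\lambda(s)z\bigr).\]

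Set $\mathcal{B}(y,z):=\bigcup_{s\in\mathbb{R}}\mathcal{F}(y,z,s)\times\{s\}\subset\partial M\times\mathbb{R}$, and for $\phi,\psi\colon[n]\to[q]$ define $B^{\phi,\psi}(x):=\mathcal{B}\bigl(\phi_{\ast}(x),\psi_{\ast}(x)\bigr)$. Naturality (iv) is then automatic from this pullback description. Condition (i) holds because $\lambda\equiv 0$ on $(-\infty,\epsilon)$ forces $\mathcal{B}(y,z)|_{(-\infty,\epsilon)}=\tilde f(y)\times(-\infty,\epsilon)$, so $B^{\phi,\psi}(x)|_{(-\infty,\epsilon)}=\phi^{\ast}f(x)|_{(-\infty,\epsilon)}$; condition (ii) is the symmetric statement using $\lambda\equiv 1$ on $(1-\epsilon,\infty)$. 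Condition (iii) is immediate: on the diagonal $y=z$ the convex combination is constantly $y$, so $\mathcal{F}(y,y,s)=\tilde f(y)$ for every $s$ and hence $\mathcal{B}(y,y)=\tilde f(y)\times\mathbb{R}$, which under the orthogonal identification equals $f(y)$.

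The main obstacle, and the only non-formal input, is to check that $\mathcal{B}(y,z)$ is actually a smooth $k$-submanifold of $\partial M\times\mathbb{R}$, closed as a subset, varying continuously with $(y,z)$ in the topology of $\Psi_k(\partial M)$; for an arbitrary continuous $\tilde f$ the trace of a path in $\Psi_{k-1}(\partial M)$ need not even be locally Euclidean. The smooth approximation theorem of \cite{osmonoids} is what handles this: one represents the continuous family $\tilde f$ by a genuine smooth $(k-1+q)$-dimensional submanifold $W_f\subset\Delta^q\times\partial M$, whereupon $\mathcal{B}(y,z)$ is exhibited as the slice at $(y,z)$ of the preimage of $W_f$ under the smooth map $(y,z,s,m)\mapsto\bigl((1-\lambda(s))y+\lambda(s)z,m\bigr)$, and is then manifestly a smooth submanifold of the correct dimension with closed slices. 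Because the interpolation collapses to the first projection on the diagonal, a relative form of the approximation that leaves the representative of $\tilde f$ unchanged over $\Delta^q\hookrightarrow\Delta^q\times\Delta^q$ preserves (iii) strictly, while (i), (ii) and (iv) are unaffected by the smoothing and the verifications carry through verbatim.
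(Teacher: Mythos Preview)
Your overall strategy is exactly the paper's: pass from $f$ to $\tilde f\colon\Delta^q\to\Psi_{k-1}(\partial M)$, build $B^{\phi,\psi}(x)$ as the spatial trace in $\partial M\times\mathbb{R}$ of the convex interpolation reparametrized by a step function, and invoke the smooth approximation theorem of \cite{osmonoids} to turn that trace into an honest submanifold. The formal checks of (i)--(iv) are also the same.

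The gap is in \emph{where} you apply the approximation. You smooth $\tilde f$ itself --- replacing $W_f\subset\Delta^q\times\partial M$ by a genuine smooth submanifold --- and then pull back along $(y,z,s,m)\mapsto\bigl((1-\lambda(s))y+\lambda(s)z,m\bigr)$. But altering $\tilde f$ alters its values: for $s<\epsilon$ the resulting $\mathcal{B}(y,z)$ agrees with $\tilde f'(y)\times(-\infty,\epsilon)$, not with the original $\tilde f(y)\times(-\infty,\epsilon)$ that condition (i) demands. So contrary to your last sentence, (i) and (ii) \emph{are} destroyed by this smoothing, and there is no ``relative form over $\Delta^q\hookrightarrow\Delta^q\times\Delta^q$'' available here, since $W_f$ lives over a single $\Delta^q$. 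The paper instead smooths the interpolation $H_{x,y}\colon[0,1]\to\Psi_{k-1}(\partial M)$ in the $[0,1]$-direction (as a compact family parametrized by $\Delta^q\times\Delta^q$), relative to neighborhoods of $0$ and $1$ where $H_{x,y}$ is already constant; this leaves the endpoint values $\overline f(x),\overline f(y)$ untouched and hence preserves (i) and (ii) on the nose. For (iii) one further uses that $H_{x,x}$ is constant in $t$ and hence already smooth, so the approximation may also be taken relative to the diagonal in $\Delta^q\times\Delta^q$; the paper does not spell this last point out either, but that is the correct version of the relative argument you were reaching for.
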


\begin{proof}
For any $d$-manifold $W$, define a map $F\colon [0,1]\longrightarrow \Psi_k(W)$ to be smooth if the subset
\[E=\{(p,t)\in W\times[0,1]|p\in F(t)\}\]
is a submanifold of $W\times [0,1]$.
By a result of \cite{osmonoids}, one can homotope compact families of continuous maps $[0,1]\longrightarrow \Psi_k(W)$ to compact families of smooth maps keeping fixed subintervals of $[0,1]$ where the family was already smooth, just as for usual manifolds.

Let $f\colon\Delta^q\longrightarrow\Psi_k(\partial M)^{\bot_{\mathbb{R}}}$ be a continuous map.
Since the values of $f$ are orthogonal on $\mathbb{R}$, the submanifolds $f(x)\subset\partial M\times (0,\infty)$ are of the form
\[f(x)=N_x\times (0,\infty)\]
for some $(k-1)$-dimensional submanifold $N_x$ of $\partial M$. This defines a continuous map $\overline{f}\colon\Delta^q\longrightarrow\Psi_{k-1}(\partial M)$ by $\overline{f}(x)=N_x$. Here we consider $\Psi_{k-1}$ as a sheaf on $(d-1)$-dimensional manifolds
\[\Psi_{k-1}\colon\emb_{d-1}\longrightarrow\Top\]
so that $\Psi_{k-1}(\partial M)$ consists of actual submanifolds of $\partial M$ and not of $\partial M\times\mathbb{R}$.
Let $\alpha\colon[0,1]\longrightarrow[0,1]$ be a monotone smooth map with constant value $0$ in a neighborhood of $0$, with constant value $1$ in a neighborhood of $1$, and that agrees with the identity away from zero and one. Consider the continuous map $H\colon \Delta^{q}\times\Delta^{q}\times[0,1]\longrightarrow \Psi_{k-1}(\partial M)$ defined by
\[H_{x,y}(t)=\overline{f}(x\cdot (1-\alpha(t))+y\cdot \alpha(t))\]
For every fixed $x,y\in\Delta^q$, the map
\[H_{x,y}\colon [0,1]\longrightarrow \Psi_{k-1}(\partial M)\]
is smooth near $0$ and $1$ by our choice of $\alpha$. Approximate $H$ to a map $K\colon \Delta^{q}\times\Delta^{q}\times[0,1]\longrightarrow \Psi_{k-1}(\partial M)$ such that $K_{x,y}\colon [0,1]\longrightarrow \Psi_{k-1}(\partial M)$ is smooth for all $x,y\in\Delta^q$ and that agrees with $H_{x,y}$ around $0$ and $1$.
Since $K_{x,y}$ is smooth, the space
\[B_{x,y}:=\{(p,t)\in \partial M\times[0,1]|p\in K_{x,y}(t)\}\]
is a smooth $k$-dimensional submanifold of $\partial M\times [0,1]$. Since $H_{x,y}$ is constant near $0$ and $1$, the submanifold $B_{x,y}$ agrees with $f(x)$ near $\partial M\times 0$ and with $f(y)$ near $\partial M\times 1$. Given $\phi,\psi\colon[n]\longrightarrow[q]$ define a continuous map $B^{\phi,\psi}\colon\Delta^n\longrightarrow \Psi_k(\partial M)$ for the damping condition by
\[B^{\phi,\psi}(x)=B_{\phi_\ast x,\psi_\ast x}\]
as a submanifold of $\partial M\times\mathbb{R}$ extended orthogonally outside $\partial M\times [0,1]$.
\end{proof}

\begin{prop}\label{mangplike} If $\partial M=N\times\mathbb{R}$ for some $(d-2)$-manifold $N$ without boundary, $\Psi_k$ is group like at $M$.
\end{prop}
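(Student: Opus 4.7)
My plan is to construct right inverses explicitly; left inverses follow symmetrically, replacing $R := (2-\id)^\ast G$ with $L := (-\id)^\ast G$, where the involutions act on the collar coordinate $s$. Writing $\partial M = N \times \mathbb{R}_r$, let $N_0, N_1 \subset \partial M$ be the $(k-1)$-submanifolds with $G|_{(-\infty,\epsilon)} = N_0 \times (-\infty,\epsilon)$ and $G|_{(1-\epsilon,\infty)} = N_1 \times (1-\epsilon,\infty)$ given by orthogonality. Conditions i) and ii) of Definition~\ref{inverse} are immediate: $N_1 \times (1-\epsilon, 1+\epsilon)$ is invariant under $s \mapsto 2-s$, so $R|_{(1-\epsilon,1+\epsilon)} = G|_{(1-\epsilon,1+\epsilon)}$, and $t_2^\ast R = (-\id)^\ast G$ restricted to $(-\epsilon,\epsilon)$ equals $G|_{(-\epsilon,\epsilon)}$ by invariance of $N_0 \times (-\epsilon, \epsilon)$ under $s \mapsto -s$.

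The heart of the argument is constructing the path $\gamma$ of condition iii), joining the doubled bordism $M_0 := G|_{(-\infty,1-\epsilon)} \cup R|_{(1-\epsilon,\infty)}$ to an element orthogonal on $(-\epsilon, 2+\epsilon)$, while keeping $\gamma(t)|_{(-\infty, \epsilon)} = N_0 \times (-\infty, \epsilon)$ and $\gamma(t)|_{(2-\epsilon, \infty)} = N_0 \times (2-\epsilon, \infty)$ throughout. The boundary conditions together with orthogonality force $\gamma(1) = N_0 \times \mathbb{R}_s$ on $(-\epsilon, 2+\epsilon)$. In general $M_0$ is not diffeomorphic to $N_0 \times \mathbb{R}_s$, so no ambient isotopy can suffice. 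The hypothesis $\partial M = N \times \mathbb{R}_r$ is used precisely to supply the extra $\mathbb{R}_r$-direction along which to perform a sliding argument: paths in $\Psi_k$ may push portions of a submanifold out to $r = \pm\infty$, and introduce portions from $\pm\infty$, since the topology on $\Psi_k$ tests the submanifold only on compact subsets of the ambient manifold.

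I would realize $\gamma$ as a two-stage construction based on ambient translations $F_t(n,r,s) = (n, r + \lambda(t)\rho(s), s)$, where $\rho\colon\mathbb{R}\to[0,1]$ is a smooth bump supported in $(\epsilon, 2-\epsilon)$ with $\rho \equiv 1$ on an inner subinterval. In the first stage, $t \in [0,1/2]$ with $\lambda(t)$ running from $0$ to $+\infty$, set $\gamma(t) := F_t(M_0)$; since $F_t$ fixes the regions outside the slab, the boundary conditions hold, and in the $\Psi_k$-limit the slab portion of $M_0$ exits every compact subset, leaving $\gamma(1/2) = N_0 \times ((-\infty, \epsilon) \cup (2-\epsilon, \infty))$. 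In the second stage, $t \in [1/2, 1]$, insert the missing middle of the cylinder by applying the reverse translation family (with $\lambda(t)$ decreasing from $+\infty$ to $0$) to the model cylinder $N_0 \times (\epsilon/2, 2-\epsilon/2)$, sliding it in from $r = -\infty$ and taking the union with the fixed collars; at $t = 1$ the translation is the identity, and $\gamma(1) = N_0 \times \mathbb{R}_s$ is orthogonal on $(-\epsilon, 2+\epsilon)$ as required. The main obstacle is checking $\Psi_k$-continuity at the limit times $t \to 1/2$ and $t \to 1$, and ensuring smoothness of the inserted cylinder where it meets the orthogonal collars. Both follow from the fact that the $\Psi_k$-topology is defined by neighborhoods $\mathcal{V}_{K,W}$ testing only on compact $K$, so that submanifolds translated to infinity disappear from every compact region in the limit and those introduced from infinity enter continuously, and from choosing $\rho$ to vanish to infinite order at its boundary so that the translated cylinder remains smooth.
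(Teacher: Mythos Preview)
Your overall strategy---take $R=(2-\id)^\ast G$, then use the extra $\mathbb{R}_r$-direction to push the doubled bordism $M_0$ to infinity and pull in the cylinder $N_0\times\mathbb{R}_s$---is exactly the paper's. The gap is in the execution of the path $\gamma$. Your claimed midpoint $\gamma(1/2)=N_0\times\bigl((-\infty,\epsilon)\cup(2-\epsilon,\infty)\bigr)$ is not an element of $\Psi_k(\partial M\times\mathbb{R})$: it is not closed as a subset, and its closure $N_0\times\bigl((-\infty,\epsilon]\cup[2-\epsilon,\infty)\bigr)$ has boundary. More fundamentally, the family $F_t(M_0)$ does not converge in $\Psi_k$ as $\lambda\to\infty$: with $\rho$ supported in $(\epsilon,2-\epsilon)$, the transition of $\rho$ lies where $M_0$ need not be a product, and near $s=\epsilon$ the sheared manifold develops an ``elbow'' whose limiting shape is a corner, not a submanifold. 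The second stage has the same problem: taking the union of the fixed collars with a translated cylinder $N_0\times(\epsilon/2,2-\epsilon/2)$ produces, for $t>1/2$, sets with boundary at $s=\epsilon$ and $s=2-\epsilon$.

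The paper's construction avoids this by two adjustments. First, the bump (their $\psi_t$) has its transition region strictly inside the orthogonal collar, e.g.\ in $(\epsilon/3,2\epsilon/3)$ where $M_0$ is already the product $N_0\times(\epsilon/3,2\epsilon/3)$; the boundary condition in Definition~\ref{gplike} is then satisfied for the smaller $\epsilon/3$. Second, rather than gluing a separate cylinder onto fixed collars, the paper applies the \emph{same} diffeomorphism family to the full cylinder $N_0\times\mathbb{R}_s$ to produce the second half of the path. Because $M_0$ and $N_0\times\mathbb{R}_s$ agree on the entire region where $\psi_t\neq\phi_t$ (the identity and transition regions), the two halves match up as $t\to 1$ and the concatenation is a continuous path through smooth closed submanifolds.
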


\begin{proof}
Let us define an inverse for an element $G\in\Psi_k(\partial M)^{\bot_{(-\infty,a+\epsilon)}}\cap\Psi_k(\partial M)^{\bot_{(b-\epsilon,\infty)}}$. This is the same as a submanifold of $\partial M\times[a,b]$ which intersect the boundary components orthogonally.
Define \[\overline{G}=(2b-\id)^\ast G\] to be the flip of $G$ along $\partial M\times b$. Since $G$ intersect the boundary orthogonally $\overline{G}|_{(b-\epsilon,b+\epsilon)}=G|_{(b-\epsilon,b+\epsilon)}$. By abuse of notation we write
\[G\cup\overline{G}:=G|_{(-\infty,b+\epsilon)}\cup\overline{G}|_{(b-\epsilon,\infty)}\]
Let us define the path $\gamma\colon I\longrightarrow\Psi_k(\partial M)$ from $G\cup\overline{G}$ to a an element orthogonal over $\mathbb{R}$.
Recall that we are assuming $\partial M=N\times\mathbb{R}$. For every $t\in[0,1)$ let us denote $\phi_t\colon N\times\mathbb{R}\times\mathbb{R}$ the diffeomorphism defined by
\[\phi_t(x,u,v):=(x,u+\frac{t}{1-t},v)\]
The topology on $\Psi_k$ is defined so that the path $\omega\colon [0,1]\longrightarrow\Psi_k(\partial M)$ defined for $0\leq t<1$ by
\[\omega(t)=\phi_{t}^{\ast}(G\cup\overline{G})\]
and $\omega(1)=\emptyset$ is continuous. Similarly, the path $\omega'\colon [0,1]\longrightarrow\Psi_k(\partial M)$ defined for $0\leq t<1$ by
\[\omega(t)=\phi_{t}^{\ast}(\alpha^{\ast}G|_{(a-\epsilon,a+\epsilon)})\]
and by $\omega'(1)=\emptyset$ is continuous as well. Here $\alpha\colon \mathbb{R}\longrightarrow (a-\epsilon,a+\epsilon)$ is an orientation preserving diffeomorphism. The concatenation $\gamma'$ of $\omega$ with the inverse path of $\omega'$ is a path between $G\cup\overline{G}$ and the manifold $\alpha^{\ast}G|_{(a-\epsilon,a+\epsilon)}$ which is orthogonal on $\mathbb{R}$. The path $\gamma'$ is however not constant on a neighborhood of $(-\infty,a]\cup[b,\infty)$, but notice that $\gamma'(0)$ and $\gamma'(1)$ agree on $(-\infty,a+\epsilon)\cup(b-\epsilon,\infty)$. In order to obtain a path constant outside $[a,b]$ replace the family of diffeomorphisms $\id\times\phi_t\colon\mathbb{R}^2\longrightarrow\mathbb{R}^2$ with a family of diffeomorphisms $\psi_t\colon\mathbb{R}^2\longrightarrow\mathbb{R}^2$ with
\begin{itemize}
\item $\psi_t=\phi_t$ on $\mathbb{R}\times (a+\frac{2\epsilon}{3},b-\frac{2\epsilon}{3})$
\item $\psi_t=\id$ on $(-\infty,a+\frac{\epsilon}{3})\cup(b-\frac{\epsilon}{3},\infty)$
\item $\psi_0=\id$
\end{itemize}

A similar construction gives the path $\delta$ for $t_{ab}^{\ast}(\overline{G}|_{(-\infty,2a-b+\epsilon)})\cup G|_{(a-\epsilon,\infty)}$.
\end{proof}

\begin{bibdiv}

  \begin{biblist}

\bib{david}{book}{
   author={Ayala, David},
   title={Geometric cobordism categories},
   note={Thesis (Ph.D.)--Stanford University},
   publisher={ProQuest LLC, Ann Arbor, MI},
   date={2009},
   pages={166},
   isbn={978-1109-24281-2},
   review={\MR{2713365}},
}

\bib{soren}{article}{
   author={Galatius, S{\o}ren},
   title={Stable homology of automorphism groups of free groups},
   journal={To appear in Annals of Math.},
   
}

\bib{osmonoids}{article}{
   author={Galatius, S{\o}ren},
   author={Randal-Williams, Oscar},
   title={Monoids of moduli spaces of manifolds},
   journal={Geom. Topol.},
   volume={14},
   date={2010},
   number={3},
   pages={1243--1302},
   issn={1465-3060},
   review={\MR{2653727 (2011j:57047)}},
   doi={10.2140/gt.2010.14.1243},
}

\bib{GMTW}{article}{
   author={Galatius, S{\o}ren},
   author={Tillmann, Ulrike},
   author={Madsen, Ib},
   author={Weiss, Michael},
   title={The homotopy type of the cobordism category},
   journal={Acta Math.},
   volume={202},
   date={2009},
   number={2},
   pages={195--239},
   issn={0001-5962},
   review={\MR{2506750}},
   doi={10.1007/s11511-009-0036-9},
}

\bib{gromov}{book}{
   author={Gromov, Mikhael},
   title={Partial differential relations},
   series={Ergebnisse der Mathematik und ihrer Grenzgebiete (3) [Results in
   Mathematics and Related Areas (3)]},
   volume={9},
   publisher={Springer-Verlag},
   place={Berlin},
   date={1986},
   pages={x+363},
   isbn={3-540-12177-3},
   review={\MR{864505 (90a:58201)}},
}

\bib{MadsenWeiss}{article}{
   author={Madsen, Ib},
   author={Weiss, Michael},
   title={The stable moduli space of Riemann surfaces: Mumford's conjecture},
   journal={Ann. of Math. (2)},
   volume={165},
   date={2007},
   number={3},
   pages={843--941},
   issn={0003-486X},
   review={\MR{2335797 (2009b:14051)}},
   doi={10.4007/annals.2007.165.843},
}

\bib{oscar}{article}{
   author={Randal-Williams, Oscar},
   title={Embedded cobordism categories and spaces of submanifolds},
   journal={Int. Math. Res. Not. IMRN},
   date={2011},
   number={3},
   pages={572--608},
   issn={1073-7928},
   review={\MR{2764873 (2012d:57044)}},
   doi={10.1093/imrn/rnq072},
}
\bib{segal}{article}{
   author={Segal, Graeme},
   title={Classifying spaces related to foliations},
   journal={Topology},
   volume={17},
   date={1978},
   number={4},
   pages={367--382},
   issn={0040-9383},
   review={\MR{516216 (80h:57036)}},
   doi={10.1016/0040-9383(78)90004-6},
}

\bib{wald}{article}{
   author={Waldhausen, Friedhelm},
   title={Algebraic $K$-theory of spaces, a manifold approach},
   conference={
      title={Current trends in algebraic topology, Part 1},
      address={London, Ont.},
      date={1981},
   },
   book={
      series={CMS Conf. Proc.},
      volume={2},
      publisher={Amer. Math. Soc.},
      place={Providence, R.I.},
   },
   date={1982},
   pages={141--184},
   review={\MR{686115 (84f:18025)}},
}

 \end{biblist}

\end{bibdiv}

\end{document}